\documentclass[11pt]{amsart}

\usepackage{amsfonts}
\usepackage{amsmath}
\usepackage{amsthm}
\usepackage{url}
\usepackage[all]{xy}
\usepackage{graphicx}
\usepackage{latexsym}
\usepackage{amssymb}
\usepackage[cp850]{inputenc}
\usepackage{epsfig}
\usepackage{psfrag}
\usepackage{amsfonts}

\newtheorem{sat}{Theorem}[section]		
\newtheorem{lem}{Lemma}[section]
\newtheorem{kor}{Corollary}[section]		\newtheorem{prop}{Proposition}[section]
				\newtheorem*{defi*}{Definition}
			\newtheorem*{bei*}{Example}
\newtheorem*{sat*}{Theorem}				\newtheorem*{kor*}{Corollary}
\newtheorem*{rmk*}{Remark}					


\let\ssection=\section
\renewcommand{\section}{\setcounter{equation}{0}\ssection} 

\newtheorem*{namedtheorem}{\theoremname}
\newcommand{\theoremname}{testing}

\theoremstyle{remark}
\newtheorem*{bem}{Remark}

\newcommand{\BR}{\mathbb R}			
			\newcommand{\BQ}{\mathbb Q}
			\newcommand{\BZ}{\mathbb Z}

		\newcommand{\CF}{\mathcal F}
\newcommand{\CG}{\mathcal G}

\newcommand{\CO}{\mathcal O}		\newcommand{\CP}{\mathcal P}
		
\newcommand{\CS}{\mathcal S}

\newcommand{\actson}{\curvearrowright}
\newcommand{\D}{\partial}

\newcommand{\bs}{\backslash}

\DeclareMathOperator{\Out}{Out}		
\DeclareMathOperator{\SL}{SL}		

\newcommand{\comment}[1]{}

\DeclareMathOperator{\Aut}{Aut}
\DeclareMathOperator{\Stab}{Stab}

\DeclareMathOperator{\vcd}{{vcd}}

\DeclareMathOperator{\cdm}{\underline{cd}}

\DeclareMathOperator{\gdim}{\underline{gd}}

\DeclareMathOperator{\Inn}{Inn}
\DeclareMathOperator{\St}{St}

\begin{document}

\title[]{Dimension invariants of outer automorphism groups}

\author{Dieter Degrijse}
\address{Centre for Symmetry and Deformation, University of Copenhagen}
\email{D.Degrijse@math.ku.dk}
\author{Juan Souto}
\address{IRMAR, Universit\'e de Rennes 1}
\email{juan.souto@univ-rennes1.fr}

\thanks{The first author was supported by the Danish National Research Foundation through the Centre for Symmetry and Deformation (DNRF92).}

\begin{abstract}
The geometric dimension for proper actions $\underline{\mathrm{gd}}(G)$ of a group $G$ is the minimal dimension of a classifying space for proper actions $\underline{E}G$. We construct for every integer $r\geq 1$, an example of a virtually torsion-free Gromov-hyperbolic group $G$ such that for every group $\Gamma$ which contains $G$ as a finite index normal subgroup, the virtual cohomological dimension $\vcd(\Gamma)$ of $\Gamma $ equals $\gdim(\Gamma)$ but such that the outer automorphism group $\Out(G)$ is virtually torsion-free, admits a cocompact model for $\underline E\Out(G)$ but nonetheless has $\vcd(\Out(G))\le\gdim(\Out(G))-r$.
\end{abstract}
\maketitle

\section{Introduction}
Let $G$ be a discrete virtually torsion-free group. A classifying space for proper actions of $G$, also called a model for $\underline{E}G$, is a proper $G$-CW-complex $X$ such that the fixed point set $X^H$ is contractible for every finite subgroup $H$ of $G$. Such a model $X$ is called cocompact, if the orbit space $G \setminus X$ is compact. The geometric dimension (for proper actions) $\underline{\mathrm{gd}}(G)$ of $G$ is the smallest dimension of a model for $\underline EG$. This invariant is bounded from below by the virtual cohomological dimension $\vcd(G)$ of $G$, which is the cohomological dimension of any finite index torsion-free subgroup of $G$, and in many interesting cases these two quantities are in fact equal (e.g. see \cite{KMPN,DMP,lattices,Luck2,Vogtmann,Armart}). On the other hand, there are by now a number of examples showing that $\gdim(G)$ can be strictly larger than $\vcd(G)$ (see \cite{LearyNucinkis,Conchita,DegrijsePetrosyan,LP}). 

Our main goal in this paper is to provide other examples of groups where the geometric dimension is strictly larger than the virtual cohomological dimension. These examples will arise as groups of (outer) automorphisms of certain Gromov-hyperbolic groups. Before going any further, recall that there is equality between geometric and virtual cohomological dimensions for groups of automorphisms of prominent hyperbolic groups such as free groups (\cite{Luck2,Vogtmann}) and surface group (\cite{Armart}). On the other hand, the construction in \cite{LP} can be adapted to yield center free hyperbolic groups $G$ with finite $\Out(G)$ and for which we have $\gdim(G)>\vcd(G)$. For any such group $G$ we have a fortiori $\gdim(\Aut(G))>\vcd(\Aut(G))$. Our groups are different. We will namely consider automorphisms groups of hyperbolic groups which are {\em dimension rigid} in the following sense.

\begin{defi*} 
{\em We say a virtually torsion-free group $G$ is {\em dimension rigid} if one has $\gdim(\Gamma)=\vcd(\Gamma)$ for every group $\Gamma$ which contains $G$ as a finite index normal subgroup.}
\end{defi*}

\begin{bem}
The notion of dimension rigidity arises naturally when one investigates the behaviour of the geometric dimension under group extensions. Indeed, it follows from \cite[Cor. 2.3]{DegrijsePetrosyan} that if $G$ is dimension rigid and
\[   1 \rightarrow G \rightarrow \Gamma \rightarrow Q \rightarrow 1        \]
is a short exact sequence, then $\gdim(\Gamma) \leq \gdim(G)+ \gdim(Q)$.
\end{bem}

We now state our main result.

\begin{sat}\label{main2}
For every $r\geq 0$, there exists a dimension rigid CAT(0) Gromov-hyperbolic group $G$ such that $\Out(G)$ is virtually torsion-free, admits a cocompact model for $\underline E\Out(G)$ and satisfies
$$\vcd(\Out(G))\le\gdim(\Out(G))-r.$$
Finally, the same result holds if we consider $\Aut(G)$ instead of $\Out(G)$.
\end{sat}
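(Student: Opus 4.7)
The plan is to decouple the result into three independent tasks: (i) produce a virtually torsion-free group $Q$ admitting a cocompact model for $\underline EQ$ and satisfying $\gdim(Q)-\vcd(Q)\ge r$; (ii) realize $Q$ as the outer automorphism group of a torsion-free, Gromov-hyperbolic, CAT(0) group $G$; and (iii) verify that this $G$ is dimension rigid. Granted these, the identification $\Out(G)\cong Q$ immediately yields the statement for $\Out(G)$, while the $\Aut(G)$-version follows along the same lines from the short exact sequence $1\to G\to\Aut(G)\to\Out(G)\to 1$ combined with the dimension rigidity of $G$ and the Remark after the definition of dimension rigidity.

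For step (i), I would follow the Leary--Nucinkis/Leary--Petrosyan recipe: choose a finite group $W$ acting on a torsion-free group $L$ whose classifying space is a finite CAT(0) complex, in such a way that the fixed-point subcomplexes for $W$ and its subgroups impose an unavoidable dimension jump of at least $r$ in any model for $\underline E(W\ltimes L)$. Take $Q=W\ltimes L$; cocompactness of a model for $\underline EQ$ is built into the construction.

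Step (ii) is the main obstacle. The aim is to find a torsion-free CAT(0) Gromov-hyperbolic group $G$ with $\Out(G)\cong Q$. My approach would be a Rips-style graph-of-groups construction whose vertex groups are surface or free groups carrying an action of $W$ compatible with the prescribed action on $L$, and whose edge maps are high-power Dehn twists along words satisfying a strong small-cancellation condition. Verifying, via Levitt's description of $\Out$ of a hyperbolic group in terms of its Bowditch/Rips--Sela JSJ decomposition, that no spurious outer automorphisms arise is the delicate point. Imposing cubulation and specialness along the way ensures that $G$ is CAT(0) as well as hyperbolic.

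For step (iii), since $G$ is torsion-free, every finite-index extension $\Gamma\supset G$ is virtually torsion-free with $\vcd(\Gamma)=\cdim(G)$. The CAT(0) space $X$ on which $G$ acts properly and cocompactly has dimension $\cdim(G)$, and because $G$ is normal in $\Gamma$ any finite subgroup $F\le\Gamma$ acts on $X$ extending the $G$-action; by the Bruhat--Tits fixed-point theorem this action has a non-empty fixed-point set. Hence $X$ is a cocompact model for $\underline E\Gamma$ of dimension $\cdim(G)=\vcd(\Gamma)$, giving $\gdim(\Gamma)=\vcd(\Gamma)$ and therefore dimension rigidity of $G$. Combining steps (i)--(iii) produces the desired $G$, and the $\Aut$-statement is handled by the exact sequence as indicated in the first paragraph.
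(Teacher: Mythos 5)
Your overall strategy is genuinely different from the paper's, but it has a fatal gap at its core, namely step (ii). The paper does not realize a prescribed Leary--Nucinkis-type group $Q$ as $\Out(G)$; instead it takes $G$ to be a free product $G_{p_1}*\dots*G_{p_{r+1}}$ of carefully built finite extensions of right angled Coxeter groups with finite $\Out(G_{p_i})$, and extracts the dimension gap from the \emph{stabilizers} of the Guirardel--Levitt outer space action, which are virtually products $\prod_i G_{p_i}^{n_i}$ whose top cohomology with group ring coefficients is torsion of pairwise coprime orders (so $\vcd$ of the product drops via K\"unneth while $\cdm$ does not). Your step (ii) is not a construction but a wish, and it is very likely unachievable: for a torsion-free \emph{one-ended} hyperbolic group, Levitt's description of $\Out(G)$ via the JSJ decomposition shows it is virtually an extension of a product of mapping class groups by a group of twists built from centralizers of (virtually) cyclic edge groups, hence virtually (abelian)-by-(product of MCGs); a group $W\ltimes L$ with $L$ a Bestvina--Brady kernel is not of this form. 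For infinitely-ended $G$ one is forced back into the Guirardel--Levitt picture, where $\Out(G)$ is never isomorphic to such a $Q$ but must instead be analyzed through its action on outer space --- which is exactly what the paper does. So the "decoupling" into (i)+(ii) cannot be carried out.

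There are two further gaps. In step (iii), the claim that the CAT(0) space $X$ has dimension $\cdim(G)$ is unjustified (a proper cocompact CAT(0) model can have dimension strictly larger than $\cdim(G)$), and, more seriously, the claim that the $G$-action on $X$ extends to any finite extension $\Gamma$ is precisely a Nielsen-realization statement that requires proof; the paper spends Section 3 establishing it for its Coxeter-based examples via Tits' and M\"uhlherr's theorems on $\Aut(W)$. Finally, for the $\Aut(G)$ statement, the short exact sequence $1\to G\to\Aut(G)\to\Out(G)\to1$ together with the Remark only gives \emph{upper} bounds on $\gdim(\Aut(G))$ and $\vcd(\Aut(G))$; to produce the gap one needs a \emph{lower} bound on $\gdim(\Aut(G))$, and since $\Out(G)$ is a quotient rather than a subgroup of $\Aut(G)$, $\gdim(\Aut(G))\ge\gdim(\Out(G))$ does not follow. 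The paper instead lifts an explicit simplex stabilizer to $\Aut(G)$ to get the lower bound on $\cdm(\Aut(G))$, at the cost of using more free factors.
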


Before commenting on the proof of Theorem \ref{main2}, let us point out that there are a number of groups which are dimension rigid in the sense above. For instance, finitely generated abelian groups are dimension rigid. More generally, all elementary amenable groups of type $\mathrm{FP}_\infty$ are dimension rigid (\cite{KMPN}). Also, the solution of the Nielsen realisation problem implies that surface groups are dimension rigid. The same argument applies to free groups as well. Mostow's rigidity theorem implies the dimension rigidity of cocompact irreducible lattices in semisimple Lie groups non isogenous to $\SL_2\BR$ and without compact factors. As we will see in this paper, certain right angled Coxeter groups are dimension rigid as well (Corollary \ref{coxdimrig}). We will prove that, under certain assumptions, dimension rigidity is preserved when taking free products (Proposition \ref{thm-free-product-rigid}). Naively, one could take this last statement to suggest that groups constructed using dimension rigid groups should be dimension rigid. However, already Theorem \ref{main2} disproves this claim: the group $G$ is dimension rigid but $\Out(G)$ is not. In fact, in the course of the proof of the Theorem \ref{main2} we will also encounter other incarnations of this phenomenon. For instance, dimension rigidity is not preserved under direct products.

\begin{sat}\label{thm-bla}
There are dimension rigid groups $G,G'$ such that $G\times G'$ is not dimension rigid.
\end{sat}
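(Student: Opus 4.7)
The plan is to produce two explicit dimension rigid groups $G$ and $G'$ together with a virtually torsion-free overgroup $\Gamma$ containing $G\times G'$ as a finite index normal subgroup and satisfying $\gdim(\Gamma)>\vcd(\Gamma)$. Such a $\Gamma$ automatically certifies that $G\times G'$ fails to be dimension rigid, so the whole task reduces to building this overgroup. For the dimension rigid building blocks I would draw on the classes listed after Theorem~\ref{main2}: finitely generated abelian groups, surface groups, or right-angled Coxeter groups from Corollary~\ref{coxdimrig}. The essential extra requirement is that $G$ and $G'$ admit a common finite group $F$, which I would take cyclic of prime order $p$, acting on each by automorphisms.

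The candidate is the \emph{diagonal} semidirect product $\Gamma=(G\times G')\rtimes F$. Crucially this is not isomorphic to $(G\rtimes F)\times G'$ nor to $G\times(G'\rtimes F)$, so the dimension rigidity of $G$ and $G'$ individually does not directly apply to $\Gamma$; in particular the additivity bound $\gdim(\Gamma)\le\gdim(G)+\gdim(G')$ from the remark following the definition of dimension rigidity does not a priori exclude a strict inequality between $\vcd(\Gamma)$ and $\gdim(\Gamma)$. The identity $\vcd(\Gamma)=\vcd(G)+\vcd(G')$ is immediate from finiteness of $[\Gamma:G\times G']$. The strict inequality $\gdim(\Gamma)>\vcd(\Gamma)$ is then to be established via a Bredon cohomology obstruction for the family of finite subgroups of $\Gamma$, along the Leary-Nucinkis lines underlying the known examples cited in the introduction: choose the $F$-action so that the individual fixed point sets $(\underline E G)^F$ and $(\underline E G')^F$ exist but are of positive codimension in the respective minimal models, then identify a non-vanishing Bredon cohomology class of $\Gamma$ in degree $\vcd(\Gamma)+1$.

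The principal obstacle is producing the non-vanishing Bredon class. This is exactly the sort of cohomological obstruction that drives the gap $\vcd(\Out(G))\le\gdim(\Out(G))-r$ in Theorem~\ref{main2}, so I expect Theorem~\ref{thm-bla} to be extracted as a byproduct of that construction rather than proved in isolation: one looks inside the $\Out(G)$ (or $\Aut(G)$) of Theorem~\ref{main2} for a finite index subgroup $\Gamma$ which splits as a direct product of two dimension rigid groups extended by a finite group, and reads the statement off. If such a factorization is available within the machinery of Theorem~\ref{main2}, the proof here amounts to bookkeeping; locating and verifying that factorization is the crux.
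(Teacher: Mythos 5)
Your proposal correctly reduces the theorem to exhibiting a virtually torsion-free overgroup $\Gamma$ containing $G\times G'$ with finite index and satisfying $\gdim(\Gamma)>\vcd(\Gamma)$, but the mechanism you propose for creating the gap is not the one that works, and the decisive step is left as an unverified hope. Two concrete problems. First, your assertion that $\vcd(\Gamma)=\vcd(G)+\vcd(G')$ is ``immediate from finiteness of $[\Gamma:G\times G']$'' is only half right: the index gives $\vcd(\Gamma)=\vcd(G\times G')$, but the further equality $\vcd(G\times G')=\vcd(G)+\vcd(G')$ can fail, and its failure is precisely the engine of the paper's proof --- by assuming additivity of $\vcd$ on the product you argue yourself out of the actual source of the gap. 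Second, the diagonal extension $(G\times G')\rtimes F$ with a Leary--Nucinkis style fixed-point-codimension obstruction is never constructed: for the building blocks you name first (finitely generated abelian groups, surface groups) the top group-ring cohomology contains $\BZ$, so the K\"unneth formula makes $\vcd$ additive on products and no such example can come from them; and for Bestvina--Brady-type constructions, where the fixed-point mechanism does produce gaps, the factors are not dimension rigid in the required sense. You explicitly set aside the factor-wise product $(G\rtimes F)\times(G'\rtimes F')$ as uninteresting, yet that is exactly the paper's witness.

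The paper's route is as follows. Proposition \ref{groups-Gp} produces, for each odd prime $p$, a finite extension $G_p=W_p\rtimes\BZ_p$ of a dimension rigid right angled Coxeter group $W_p$, acting cocompactly on a $3$-dimensional model $X_p$ with $\mathrm{H}^3_c(X_p)\cong\BZ_p$ (torsion of order $p$) and $\mathrm{H}^3_c(X_p,X_{p,\mathrm{sing}})\otimes_\BZ\BQ\neq 0$ (an infinite order class in the relative, Bredon-relevant, cohomology). Taking $G=W_3$, $G'=W_5$ and $\Gamma=G_3\times G_5$, which contains $W_3\times W_5$ as a finite index normal subgroup, Lemma \ref{lem: product vcd}(a) gives $\vcd(\Gamma)\le 3+3-1=5$ because $\BZ_3\otimes\BZ_5=\Tor_1^{\BZ}(\BZ_3,\BZ_5)=0$ kills the top integral group-ring cohomology in the K\"unneth sequence, while Lemma \ref{lem: product vcd}(c) gives $\cdm(\Gamma)=6$ because the infinite order classes in $\mathrm{H}^3_c(X_{p_i},X_{p_i,\mathrm{sing}})$ survive the Bredon K\"unneth formula. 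Hence $\vcd(\Gamma)<\cdm(\Gamma)\le\gdim(\Gamma)$. So the non-vanishing Bredon class in degree $\vcd(\Gamma)+1$ that you call for does exist, but it arises from coprime torsion in top group-ring cohomology versus a rational class in relative compactly supported cohomology --- not from a diagonal finite action --- and producing factors with exactly these two cohomological features is the actual content of the proof.
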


The groups $G_1$ and $G_2$ from Theorem \ref{thm-bla} will arise as certain right angled Coxeter groups. In fact, we will recover the following result from \cite{LP}.

\begin{sat}\label{thm-blabla}
For every integer $r\geq 0$, there is a group $G$ which contains a product of right angled Coxeter groups as a finite index normal subgroup, admits a cocompact model for $\underline{E}G$ and for which we have $\gdim(G)\ge\vcd(G)+r$.
\end{sat}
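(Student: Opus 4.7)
My strategy is to bootstrap directly from Theorem~\ref{thm-bla}. Its proof produces dimension rigid right angled Coxeter groups $G_1,G_2$ whose direct product $W_0:=G_1\times G_2$ fails to be dimension rigid, so by definition there is a virtually torsion-free group $\Gamma_0$ containing $W_0$ as a finite-index normal subgroup with $\gdim(\Gamma_0)\ge\vcd(\Gamma_0)+1$. The natural obstruction in such constructions lives on the Bredon side, so one may in fact arrange $\cdm(\Gamma_0)\ge\vcd(\Gamma_0)+1$ (using the Eilenberg--Ganea type identification $\cdm=\gdim$ in degrees $\ge 3$ after, if necessary, first multiplying by a dimension rigid right angled Coxeter group to push $\vcd(\Gamma_0)$ above $2$). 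Inducing the Davis complex of $W_0$ along $W_0\hookrightarrow\Gamma_0$ provides a cocompact model for $\underline{E}\Gamma_0$. The group I propose as the candidate in the theorem is $G:=\Gamma_0^r$.

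That $W_0^r\le G$ is a finite-index normal subgroup which is again a (finite) product of right angled Coxeter groups is clear, and the $r$-fold Cartesian product of cocompact models for $\underline{E}\Gamma_0$ is a cocompact model for $\underline{E}G$, so the first two conclusions of the theorem hold. Applying the ordinary Künneth formula to a torsion-free finite-index subgroup of $\Gamma_0$ yields additivity of the virtual cohomological dimension, $\vcd(G)=r\cdot\vcd(\Gamma_0)$.

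To bound $\gdim(G)$ from below, I would work with the Bredon cohomological dimension $\cdm$ with respect to the family of finite subgroups, using $\vcd(G)\le\cdm(G)\le\gdim(G)$. The key step is a Bredon Künneth inequality
\[
\cdm(\Gamma_0^r)\ge r\cdot\cdm(\Gamma_0),
\]
which, combined with $\cdm(\Gamma_0)\ge\vcd(\Gamma_0)+1$, gives
\[
\gdim(G)\ge\cdm(G)\ge r\cdot\cdm(\Gamma_0)\ge r(\vcd(\Gamma_0)+1)=\vcd(G)+r,
\]
as required. Establishing this Künneth inequality is the main obstacle: I would pick a Bredon module $M$ over the orbit category of $\Gamma_0$ carrying a non-zero top-dimensional cohomology class, form the external $r$-fold tensor product $M^{\boxtimes r}$, and exhibit a non-vanishing class in $H^{r\cdot\cdm(\Gamma_0)}_{\CF}(\Gamma_0^r;M^{\boxtimes r})$ by working with the equivariant cellular cochain complex of the cocompact product model $(\underline{E}\Gamma_0)^r$. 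The subtle point is that the orbit category of a direct product does not decompose as a product of orbit categories, so the external product class must be constructed geometrically on the $\underline{E}G$-side and only then transported back to the Bredon chain complex.
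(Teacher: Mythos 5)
Your overall architecture (take a single group with gap $1$ and raise it to the $r$-th power, bounding $\vcd$ from above by subadditivity and $\cdm$ from below by a K\"unneth-type superadditivity) is viable here, and numerically it lands on essentially the same groups as the paper: your $\Gamma_0^r=(G_3\times G_5)^r=G_3^r\times G_5^r$ is a product of the $G_p$'s, whereas the paper takes $G_{p_1}\times\cdots\times G_{p_{r+1}}$ for $r+1$ \emph{distinct} primes and reads off the gap directly from Lemma \ref{lem: product vcd}(a) and (c). But as written your argument has a genuine gap at precisely the step you flag as the main obstacle. The inequality $\cdm(\Gamma_0^r)\ge r\cdot\cdm(\Gamma_0)$ is \emph{not} a formal fact about groups with cocompact models: superadditivity of (Bredon) cohomological dimension under direct products fails in general by exactly the mechanism this paper is built on, namely that a nonzero top-degree class can die when tensored with another top-degree class (torsion with coprime exponents, or more generally $M\otimes N=0$ with $M,N\neq0$). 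So your plan to ``pick a Bredon module with a nonzero top class and form the external product'' does not by itself produce a nonzero class; you must also know that the class survives $r$-fold tensoring. This is why Proposition \ref{groups-Gp} insists on the second condition $\mathrm{H}^3_c(X_p,X_{p,\mathrm{sing}})\otimes_\BZ\BQ\neq0$: an infinite-order element in the top \emph{relative} compactly supported cohomology is exactly what makes the Bredon K\"unneth argument of Lemma \ref{lem: product vcd}(c) (with the free coefficient modules $F_i=\BZ[-,G_i/e]$, for which $\mathrm{H}^n_{\CF}(G_i,F_i)=\mathrm{H}^n_c(X_i,X_{i,\mathrm{sing}})$) yield a nonvanishing product class. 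Without importing that hypothesis your key inequality is unproved, and the theorem does not follow.

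Two smaller points. First, $\vcd(\Gamma_0^r)=r\cdot\vcd(\Gamma_0)$ is false as an equality claim --- the ordinary K\"unneth formula gives only $\vcd(A\times B)\le\vcd(A)+\vcd(B)$, and the failure of equality is the entire point of Lemma \ref{lem: product vcd}(a); fortunately your argument only uses the inequality $\vcd(G)\le r\cdot\vcd(\Gamma_0)$, which is correct. Second, ``inducing the Davis complex along $W_0\hookrightarrow\Gamma_0$'' is not a valid general construction of a cocompact model for $\underline E\Gamma_0$ (induction does not preserve the fixed-point conditions for finite subgroups not subconjugate to $W_0$); here one instead uses that $\Gamma_0=G_3\times G_5$ acts properly and cocompactly on the CAT(0) product $X_3\times X_5$, as in Lemma \ref{I am tired}. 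If you replace your generic K\"unneth step by an appeal to Lemma \ref{lem: product vcd}(c) applied to the $2r$ factors $G_3,\dots,G_3,G_5,\dots,G_5$ (which is legitimate since all exponents within each batch coincide and the relative classes have infinite order), your proof closes up and becomes a minor variant of the paper's.
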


We should give \cite{LP} the appropriate credit: while working on this paper, we had present all the time the examples in that paper. Theorem \ref{thm-bla} and Theorem \ref{thm-blabla} will be proven in Section \ref{sec: constr}. \\

We will now outline the strategy of the proof of our main theorem. First recall that by work of Bestvina-Feighn \cite[Cor. 1.3]{Bestvina-Feighn} and Paulin \cite{Paulin}, every Gromov-hyperbolic group $G$ with infinite $\Out(G)$ splits over a virtually cyclic group. In our case, the group $G$ arises as a free product
$$G=G_1*\dots*G_r$$
of pairwise distinct, dimension rigid, one-ended Gromov-hyperbolic groups satisfying suitable cohomological properties. Actually, each $G_i$ will be a finite extension of a right angled Coxeter group.

We consider the action of $\Out(G)$ on the (spine of the) outer space of free splittings $\CO$ constructed by Guirardel and Levitt \cite{GL}. The space $\CO$ is contractible. This implies that we can compute via a spectral sequence the cohomology of $\Out(G)$ in terms of the cohomology of the stabilizers $\Stab(\sigma)$ of simplices $\sigma$ in the spine of $\CO$. These stabilizers turn out to be virtually isomorphic to direct products $G_1^{n_1}\times\dots\times G_r^{n_r}$ for suitable choices of the exponents $n_1,\dots,n_r$. The groups $G_i$ are constructed so that their top cohomology with group ring coefficients is torsion of orders prime to each other. This implies that the virtual cohomological dimension of the product is smaller than the sum of the dimensions. On the other hand, the groups $G_i$ are also constructed so that the geometric dimension of the product equals the sum of the dimensions of the factors. 

Armed with these bounds on the virtual cohomological and geometric dimensions of the stabilizers we get, by analysing the aforementioned spectral sequence together with a combinatorial argument, that there is simplex $\sigma$ in the spine of $\mathcal{O}$ such that 
$$\vcd(\Out(G))=\vcd(\Stab(\sigma))\leq \underline{\mathrm{gd}}(\Stab(\sigma))-r \leq  \underline{\mathrm{gd}}(\Out(G))-r.$$
At this point it only remains to prove that $G$ is dimension rigid and that $\mathrm{Out}(G)$ admits a cocompact model for $
\underline{E}\mathrm{Out}(G)$. This is done in the last section of this paper.\\

\noindent \textbf{Acknowledgements.} 
The second author would like to thank Vincent Guirar\-del for many interesting conversations. The first author is grateful for a visit to the IRMAR during which the bulk of the work on this paper was done.

\section{Cohomological tools}\label{sec:tools}

In this section we recall the cohomological tools needed in this paper. 
\medskip

Let $G$ be a discrete group. A classifying space for proper actions of $G$, also called a model for $\underline{E}G$, is proper $G$-CW-complex $X$ such that the fixed point set $X^H$ is contractible for every finite subgroup $H$ of $G$. Such a model $X$ is called cocompact, if the orbit space $G \setminus X$ is compact. The geometric dimension (for proper actions) $\underline{\mathrm{gd}}(G)$ of $G$ is the smallest possible dimension of a model for $\underline EG$. Note that if there exists a cocompact model for $\underline{E}G$, then there also exists a cocompact model for $\underline{E}G$ of dimension $\underline{\mathrm{gd}}(G)$. We refer the reader to the survey paper \cite{Luck2} for more details and terminology about classifying spaces for proper actions. The geometric dimension has a certain algebraic counterpart, namely the Bredon dimension for proper actions $\cdm(G)$. Next, we briefly recall the definition of this quantity, and its relation to cohomology with compact support.

\subsection{Bredon cohomology}
Let $\CF$ be the family of finite subgroups of $G$. The \emph{orbit category} $\mathcal{O}_{\CF}G$ is the category whose objects  are left coset spaces $G/H$ with $H \in \CF$ and where the morphisms are all $G$-equivariant maps between the objects. An \emph{$\mathcal{O}_{\CF}G$-module} is a contravariant functor 
$$M: \mathcal{O}_{\CF}G \rightarrow \mathbb{Z}\mbox{-mod}$$
to the category of $\BZ$-modules. The \emph{category of $\mathcal{O}_{\CF}G$-modules}, denoted by $\mbox{Mod-}\mathcal{O}_{\CF}G$, has as objects all the $\mathcal{O}_{\CF}G$-modules and all the natural transformations between these objects as morphisms. One can show that $\mbox{Mod-}\mathcal{O}_{\CF}G$ is an abelian category that contains enough projective modules to construct projective resolutions. Hence, one can construct functors $\mathrm{Ext}^{n}_{\mathcal{O}_{\CF}G}(-,M)$ that have all the usual properties. The \emph{$n$-th Bredon cohomology of $G$} with coefficients $M \in \mbox{Mod-}\mathcal{O}_{\CF}G$ is by definition
\[ \mathrm{H}^n_{\CF}(G,M)\stackrel{\text{def}}= \mathrm{Ext}^{n}_{\mathcal{O}_{\CF}G}(\underline{\mathbb{Z}},M), \]
where $\underline{\mathbb{Z}}$ is the functor that maps all objects to $\mathbb{Z}$ and all morphisms to the identity map. 

The Bredon cohomology of $G$ can be somewhat more concretely expressed with the help of a model $X$ for $\underline EG$. Indeed, the augmented cellular chain complexes $C_{\ast}(X^H)\rightarrow \mathbb{Z}$ of the fixed point sets $X^H$, for all $H \in \mathcal{F}$, assemble to form a projective (even free) resolution $C_{\ast}(X^{-})\rightarrow \underline{\mathbb{Z}}$. We thus have that
\[ \mathrm{H}^n_{\CF}(G,M)= \mathrm{H}^n(\mathrm{Hom}_{\mathcal{O}_{\mathcal{F}}G}(C_{\ast}(X^{-}),M))\stackrel{\text{def}}= \mathrm{H}^n_G(X,M). \]
For more details, we refer the reader to \cite[Section 9]{Luck} and \cite{fluchthesis}.
 
The \emph{Bredon cohomological dimension of $G$ for proper actions}, denoted by $\cdm(G)$ is defined as
\[ \cdm(G) = \sup\{ n \in \mathbb{N} \ | \ \exists M \in \mbox{Mod-}\mathcal{O}_{\mathcal{F}}G :  \mathrm{H}^n_{\mathcal{F}}(G,M)\neq 0 \}. \]
The invariant $\cdm(G)$ should be viewed as the algebraic counterpart of $\gdim(G)$. Indeed, by \cite{LuckMeintrup} (see also \cite{BradyLearyNucinkis}) we have 
$$ \cdm(G)\leq \gdim(G) \leq \max\{3,\cdm(G)\}. $$
Since the cohomological dimension and the Bredon cohomological dimensions are identical for torsion-free groups, it follows that $\vcd(G)\leq \cdm(G)$ when $G$ is virtually torsion-free.\\

\subsection{Compactly supported cohomology}
Let $X$ be $CW$-complex and let $A$ be a subcomplex of $X$. We denote by $\mathrm{H}^{\ast}_c(X)$ and $\mathrm{H}^{\ast}_c(X,A)$ respectively, the compactly supported cohomology of $X$ and the relative compactly supported cohomology of the pair $(X,A)$. In both cases with $\mathbb{Z}$-coefficients. In the presence of a cocompact model $X$ for $\underline{E}G$ one can compute the Bredon cohomological dimension in terms of cohomology with compact support of certain subspaces of $X$. 

\begin{sat}{\rm \cite[Th. 1.1]{DMP}}\label{th: bredon compact support} 
Let $G$ be a group that admits a cocompact model $X$ for $\underline{E}G$. Then
\[    \cdm(G)= \max\{n \in \mathbb{N} \ | \ \exists K \in \CF \ \mbox{s.t.} \ \mathrm{H}_c^{n}(X^K,X^K_\mathrm{sing})  \neq 0\}   \] 
where $\CF$ is the family of finite subgroups of $G$ and where $X^K_\mathrm{sing}$ is the subcomplex of $X^{K}$ consisting of those cells that are fixed by a finite subgroup of $G$ that strictly contains $K$.
\end{sat}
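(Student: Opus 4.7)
The plan is to establish the equality as two matching inequalities, in each case relating Bredon cohomology of $G$ to the compactly supported cellular cohomology of the pairs $(X^K, X^K_\mathrm{sing})$ via the free action of the Weyl group on the top stratum. Using the cocompact model $X$ for $\underline{E}G$, the fixed-point cellular chain complexes $C_\ast(X^-)$ assemble into a free resolution of $\underline{\mathbb{Z}}$ in $\mathrm{Mod}\text{-}\mathcal{O}_{\CF}G$, so that $\mathrm{H}^n_{\CF}(G, M) = \mathrm{H}^n(\mathrm{Hom}_{\mathcal{O}_{\CF}G}(C_\ast(X^-), M))$ for any Bredon module $M$. The key structural observation is that the open cells of $X^K$ not lying in $X^K_\mathrm{sing}$ are precisely those cells of $X$ whose $G$-stabilizer equals $K$; on these, the Weyl group $W = N_G(K)/K$ acts freely, and cocompactness of $X/G$ makes the action cofinite. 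Hence $C_\ast(X^K, X^K_\mathrm{sing})$ is a chain complex of finitely generated free $\mathbb{Z}[W]$-modules, and cellular duality delivers
\[
C^n_c(X^K, X^K_\mathrm{sing}; \mathbb{Z}) \cong \mathrm{Hom}_{\mathbb{Z}[W]}\bigl(C_n(X^K, X^K_\mathrm{sing}), \mathbb{Z}[W]\bigr),
\]
identifying compactly supported cellular cohomology with $W$-equivariant cohomology valued in the regular representation.

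The next step is to decompose the Bredon cochain complex by conjugacy classes of finite subgroups. Filtering the orbit category by a total order on conjugacy classes refining the partial order of inclusion induces a filtration of $\mathrm{Hom}_{\mathcal{O}_{\CF}G}(C_\ast(X^-), M)$ whose associated graded piece indexed by $[K]$ is, up to natural isomorphism, the relative equivariant cochain complex $\mathrm{Hom}_{\mathbb{Z}[W]}(C_\ast(X^K, X^K_\mathrm{sing}), M(G/K))$. Intuitively, once the contribution of strictly larger stabilizers is quotiented out at the level $G/K$, what survives are exactly the free $W$-cells of $X^K \setminus X^K_\mathrm{sing}$.

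For the lower bound, given $K$ and $n$ with $\mathrm{H}^n_c(X^K, X^K_\mathrm{sing}; \mathbb{Z}) \neq 0$, I would take $M$ to be the Bredon module supported on the conjugacy class of $K$ with value the regular $\mathbb{Z}[W]$-module; the filtration then collapses to a single non-trivial graded piece and, combined with the cellular duality above, gives $\mathrm{H}^n_{\CF}(G, M) = \mathrm{H}^n_c(X^K, X^K_\mathrm{sing}; \mathbb{Z}) \neq 0$, so $\cdm(G) \geq n$. For the upper bound, fix $d = \cdm(G)$ and a witnessing $M$; the spectral sequence of the filtration produces some $[K]$ with $\mathrm{H}^d(\mathrm{Hom}_{\mathbb{Z}[W]}(C_\ast(X^K, X^K_\mathrm{sing}), N)) \neq 0$ for $N = M(G/K)$. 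Finite generation and freeness of $C_\ast(X^K, X^K_\mathrm{sing})$ over $\mathbb{Z}[W]$ yield the natural isomorphism $\mathrm{Hom}_{\mathbb{Z}[W]}(C_\ast, N) \cong \mathrm{Hom}_{\mathbb{Z}[W]}(C_\ast, \mathbb{Z}[W]) \otimes_{\mathbb{Z}[W]} N$, and a universal-coefficient argument (available because $d \leq \dim X^K$ kills the Tor contribution from one degree higher) forces $\mathrm{H}^d_c(X^K, X^K_\mathrm{sing}; \mathbb{Z}) \neq 0$.

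The main obstacle I expect is the construction and verification of the conjugacy-class filtration: making precise how $\mathrm{Hom}_{\mathcal{O}_{\CF}G}(C_\ast(X^-), M)$ decomposes by stabilizer type requires careful bookkeeping with the morphisms in $\mathcal{O}_{\CF}G$ and the interplay between the coefficient system $M$ and the splitting. A secondary subtlety is the universal-coefficient step in the upper bound, which genuinely needs the freeness of the chain complex over $\mathbb{Z}[W]$ together with a top-degree argument to promote non-vanishing with an arbitrary coefficient module to non-vanishing with $\mathbb{Z}$-coefficients.
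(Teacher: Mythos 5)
First, a point of reference: the paper does not prove this statement at all --- it is quoted verbatim from \cite[Th.~1.1]{DMP}, so there is no in-paper argument to compare yours against. Judged on its own, your outline is structurally sound and close in spirit to the cited source. The lower bound works: the $\mathcal{O}_{\CF}G$-module supported on the conjugacy class of $K$ with value the regular $\mathbb{Z}[W]$-module (where $W=N_G(K)/K$) is a genuine contravariant functor, and since the boundary of a cell with stabilizer exactly $K$ meets only cells whose stabilizers contain $K$, the resulting Bredon cochain complex collapses to $\mathrm{Hom}_{\mathbb{Z}[W]}(C_{\ast}(X^K,X^K_{\mathrm{sing}}),\mathbb{Z}[W])\cong C^{\ast}_c(X^K,X^K_{\mathrm{sing}})$, using that $W$ acts freely and cofinitely on the cells of $X^K\setminus X^K_{\mathrm{sing}}$. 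The conjugacy-class filtration you worry about does exist, but note the direction: the cochain differential pushes the component at a cell $\sigma$ to the cells $\tau$ having $\sigma$ as a face, and these satisfy $G_\tau\subseteq G_\sigma$; so the subcomplexes of the filtration must consist of the components at cells whose stabilizer classes are the \emph{smallest} ones.

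The one step that fails as written is the universal-coefficient argument in the upper bound. The parenthetical claim that ``$d\le\dim X^K$ kills the Tor contribution from one degree higher'' is not a valid reason when $d<\dim X^K$: the obstructions are $\mathrm{Tor}_p^{\mathbb{Z}[W]}(\mathrm{H}^{d+p}_c(X^K,X^K_{\mathrm{sing}}),N)$ for $p\ge1$, and these do not vanish for dimension reasons alone. What rescues the step is the lower bound you have already established: if $\mathrm{H}^m_c(X^K,X^K_{\mathrm{sing}})\neq0$ for some $m>d=\cdm(G)$, the lower bound gives $\cdm(G)\ge m$, a contradiction; hence the complex $D^{\ast}=\mathrm{Hom}_{\mathbb{Z}[W]}(C_{\ast}(X^K,X^K_{\mathrm{sing}}),\mathbb{Z}[W])$ of finitely generated projectives is exact above degree $d$, so it splits off an acyclic top and right-exactness of $-\otimes_{\mathbb{Z}[W]}N$ yields $\mathrm{H}^d(D^{\ast}\otimes_{\mathbb{Z}[W]}N)\cong\mathrm{H}^d_c(X^K,X^K_{\mathrm{sing}})\otimes_{\mathbb{Z}[W]}N$, giving the desired nonvanishing. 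You should make this dependence of the upper bound on the already-proved lower bound explicit. (A shorter route for the upper bound, and the one closer to \cite{DMP}, avoids the filtration entirely: in the top degree $d=\cdm(G)$ the functor $\mathrm{H}^d_{\CF}(G,-)$ is right exact and, by cocompactness, commutes with direct sums, so a surjection onto the witnessing module $M$ from a direct sum of free modules $\mathbb{Z}[-,G/K]$ forces $\mathrm{H}^d_{\CF}(G,\mathbb{Z}[-,G/K])\neq0$ for some $K$, and this group is $\mathrm{H}^d_c(X^K,X^K_{\mathrm{sing}})$.)
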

One should think of Theorem \ref{th: bredon compact support} as being the analogue of the formula 
\begin{equation} \label{eq: vcd formula} \vcd(G)=\max \{  n \in \mathbb{N} \ | \ \mathrm{H}^n_c(X) \neq 0 \} \end{equation}
where $X$ is any contractible proper cocompact $G$-CW-complex (e.g.~see \cite[Cor. 7.6]{brown}), $ \mathrm{H}^n_c(X)\cong \mathrm{H}^n(G,\mathbb{Z}[G])$ and $G$ is assumed to be virtually torsion-free.
\medskip

We finish this section with a lemma that will be used later on to analyse the possible difference in behaviour between the virtual cohomological dimension and the Bredon cohomological dimension, when taking direct products. The lemma is well-known to experts and we will therefore only give a sketch of the proof.

\begin{lem}\label{lem: product vcd} Let $G_1,\ldots,G_r$ be a collection of  virtually torsion-free groups and let $\{n_1,\ldots,n_r\}$ be a collection of non-zero natural numbers. Assume that each $G_i$ admits a cocompact $d_i$-dimensional model for proper actions $X_i$ such that $\mathrm{H}_c^{d_i}(X_i)$ is a non-trivial finite group of exponent $n_i$ and
$\mathrm{H}^{d_i}_{c}(X_i,X_{i,{\mathrm{sing}}})$ contains an element of infinite order.
\begin{itemize}
\item[(a)] If all the numbers in $\{n_1,\ldots,n_r\}$ are pairwise coprime, then one has 
\[    \mathrm{vcd}(G_1\times \ldots \times G_r) \leq \sum_{i=1}^{r}d_i -r +1. \]

\item[(b)] If all the numbers in $\{n_1,\ldots,n_r\}$ have a non-trivial common divisor, then
 \[    \mathrm{vcd}(G_1\times \ldots \times G_r) = \sum_{i=1}^{r}d_i  \]
 and 
\[  \mathrm{H}^{d_1 + \ldots + d_r}(G_1\times 
\ldots \times G_r,\mathbb{Z}[G_1 \times \ldots \times G_r])\] is torsion with exponent $\mathrm{gcd}(n_1,\ldots,n_r)$.
\item[(c)] One has
\[   \underline{\mathrm{cd}}(G_1 \times \ldots \times G_r)= \sum_{i=1}^r d_i \]
\end{itemize}
\end{lem}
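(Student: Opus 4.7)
The unifying tool is the K\"unneth theorem applied to the compactly supported cellular cochain complexes $C^*_c(X_i)$, which consist of free abelian groups once we restrict to a torsion-free finite-index subgroup $H = H_1 \times \cdots \times H_r$ of $G = G_1 \times \cdots \times G_r$. First I would check that $X = X_1 \times \cdots \times X_r$ is a cocompact $\sum d_i$-dimensional model for $\underline EG$: if $K \le G$ is finite with projections $K_i \le G_i$, then the diagonal action yields $X^K = X_1^{K_1} \times \cdots \times X_r^{K_r}$, a product of contractible fixed-point sets. This already gives the upper bounds $\vcd(G) \leq \cdm(G) \leq \sum d_i$, and via (\ref{eq: vcd formula}) reduces the computation of $\vcd(G)$ to finding the top degree in which $H^n_c(X) \neq 0$.

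For part (c) I would apply Theorem \ref{th: bredon compact support} with $K = \{e\}$. The singular subcomplex equals $\bigcup_{i=1}^r X_1 \times \cdots \times X_{i,\mathrm{sing}} \times \cdots \times X_r$, so
\[ H^n_c(X, X_{\mathrm{sing}}) \cong H^n_c(X \setminus X_{\mathrm{sing}}) \cong H^n_c\Bigl(\prod_{i=1}^r (X_i \setminus X_{i,\mathrm{sing}})\Bigr), \]
and K\"unneth produces a distinguished class $a_1 \otimes \cdots \otimes a_r$ in degree $\sum d_i$, with each $a_i \in H^{d_i}_c(X_i, X_{i,\mathrm{sing}})$ of infinite order. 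Rationalising gives a non-zero element of a tensor product of non-zero $\mathbb{Q}$-vector spaces, so the class is of infinite order in the integral tensor and $\cdm(G) \geq \sum d_i$, yielding (c). For (b), the top-degree identity $H^{\sum d_i}_c(X) = \bigotimes_i H^{d_i}_c(X_i)$ holds because Tor corrections would require total degree $\sum d_i + 1$, which exceeds $\dim X$. This tensor is annihilated by each $n_i$, hence has exponent dividing $\gcd(n_1,\dots,n_r)$; it is non-zero because each $H^{d_i}_c(X_i)$ surjects onto a $\mathbb{Z}/d$ summand with $d=\gcd$ by the structure theorem, and iterated tensoring retains $\mathbb{Z}/d$. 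Combined with the upper bound from the first paragraph this forces $\vcd(G) = \sum d_i$, and the identification $H^{\sum d_i}_c(X) \cong H^{\sum d_i}(G, \mathbb{Z}[G])$ transcribes the exponent bound.

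Part (a) is the more delicate piece, and I would handle it by induction on $r$, carrying the strengthened assertion: $H^n_c(X_1 \times \cdots \times X_r) = 0$ for all $n \geq \sum d_i - r + 2$, while $H^{\sum d_i - r + 1}_c(X_1 \times \cdots \times X_r)$ has exponent dividing $n_1 \cdots n_r$. In the inductive step one applies K\"unneth to the product $(X_1 \times \cdots \times X_{r-1}) \times X_r$; every surviving tensor or Tor contribution in degrees $\geq \sum_{i=1}^r d_i - r + 2$ is forced to pair the inductively controlled group (of exponent dividing $n_1 \cdots n_{r-1}$) against $H^{d_r}_c(X_r)$ of exponent $n_r$, whereupon the pairwise coprimality $\gcd(n_1 \cdots n_{r-1}, n_r) = 1$ kills every such contribution. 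The boundary-degree exponent bound needed for the next induction is read off the same K\"unneth decomposition. The main obstacle is precisely the necessity of strengthening the induction hypothesis: a bare vanishing statement is not propagated by K\"unneth, and one must simultaneously maintain quantitative exponent control at the critical degree in order for pairwise coprimality to close the argument at the next step.
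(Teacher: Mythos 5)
Your proposal is correct and follows essentially the same route as the paper: the product $X=X_1\times\cdots\times X_r$ as a cocompact model, the K\"unneth formula together with $\BZ_n\otimes\BZ_m\cong\Tor_1^{\BZ}(\BZ_n,\BZ_m)\cong\BZ_{\gcd(n,m)}$ and induction on $r$ for (a) and (b), and a K\"unneth argument on the relative/singular data combined with Theorem \ref{th: bredon compact support} for (c) (the paper phrases this last step via a Bredon-cohomology K\"unneth formula rather than via $\mathrm{H}^*_c$ of complements, but the content is identical). Your strengthened induction hypothesis in (a) — vanishing above degree $\sum d_i-r+2$ together with exponent control at the critical degree — is exactly the elaboration needed to make the paper's one-line "by induction on $r$" precise.
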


Here and in the sequel we set, for typographical reasons, $X_{i,{\mathrm{sing}}}$ instead of $(X_i)_{\mathrm{sing}}$. Denote also by $\BZ_n$ the cyclic group of order $n$.

\begin{proof} 
First note that $X=X_1\times \ldots\times X_r$ is a $ \sum_{i=1}^r d_i $-dimensional cocompact model for proper actions for $G=G_1 \times \ldots \times G_r$, that $$\mathbb{Z}_n \otimes \mathbb{Z}_{m}=\mathrm{Tor}_1^{\mathbb{Z}}(\mathbb{Z}_n,\mathbb{Z}_m)= \mathbb{Z}_{\mathrm{gcd}(n,m)}$$ for all integers $n,m\geq 1$, and that if $G$ and $H$ are two groups of type $FP_{\infty}$, there is a K\"{u}nneth formula (e.g. see \cite[Prop. 0.8 and Ex. V.2.2.]{brown}
\[       0 \rightarrow \bigoplus_{p+q=n} \mathrm{H}^p(G,\mathbb{Z}[G])              \otimes \mathrm{H}^p(\Gamma,\mathbb{Z}[H]) \rightarrow \mathrm{H}^{n}(G \times H,\mathbb{Z}[G\times H]) \] \[ \rightarrow \bigoplus_{p+q=n+1}\mathrm{Tor}_1^{\mathbb{Z}}(\mathrm{H}^p(G,\mathbb{Z}[G]),\mathrm{H}^q(H,\mathbb{Z}[H])) \rightarrow 0.                     \]
Using these observations together with (\ref{eq: vcd formula}), both (a) and (b) follow by induction on $r$.

Finally to prove (c), note that we already know that
$$\underline{\mathrm{cd}}(G)\le\dim X= \sum_{i=1}^r d_i.$$
To obtain the other inequality, recall that by \cite[Th. 2.4]{DMP} one has 
$$\mathrm{H}_c^{n}(X_i,X_{i,\mathrm{sing}})=\mathrm{H}^{n}_{\mathcal{F}}(G_i,F_i)$$ 
for the free module $F_i=\mathbb{Z}[-,G_i/e]$. Moreover, under the given finiteness assumptions, there is also a K\"{u}nneth formula in Bredon cohomology (e.g. following \cite[Ch 3, Section 13]{fluchthesis}) that implies 
\[ \mathrm{H}^{d_1+\ldots + d_r}_G(X,F_1\otimes\ldots \otimes F_r) \cong  \mathrm{H}^{d_1}_{G_1}(X_1,F_1)\otimes  \ldots  \otimes \mathrm{H}^{d_r}_{G_r}(X_r,F_r).     \]
The assumption that the $\mathrm{H}_c^{d_i}(X_i,X_{i,\mathrm{sing}})$ contain an infinite order element implies that $\mathrm{H}^{d_1+\ldots + d_r}_{\CF}(G,F_1\otimes\ldots \otimes F_r)= \mathrm{H}^{d_1+\ldots + d_r}_G(X,F_1\otimes\ldots \otimes F_r)$ contains such an element and is a fortiori non-trivial.  Altogether we get that 
$$\underline{\mathrm{cd}}(G)\ge  \sum_{i=1}^r d_i, $$
as we needed to prove.
\end{proof}

\section{Construction Of Certain Dimension rigid groups} \label{sec: constr}

The purpose of this section is to prove the following proposition.

\begin{prop}\label{groups-Gp}
For every odd prime $p$ there exists a dimension rigid one-ended CAT(0) Gromov-hyperbolic group $G_p$ which admits a $3$-dimensional cocompact model for proper actions $X_p$ such that 
$$\mathrm{H}^3_c(X_p)\cong \BZ_p\text{ and }\mathrm{H}_c^3(X_p,X_{p,\mathrm{sing}})\otimes_\BZ\BQ\neq 0.$$
Moreover, $\Out(G_p)$ is finite.
\end{prop}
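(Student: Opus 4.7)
The plan is to realise $G_p$ as (a finite extension of) a right-angled Coxeter group $W_{L_p}$, where $L_p$ is a flag $2$-complex chosen so that the associated Davis complex $\Sigma_{L_p}$ carries the required cohomology. Since the Davis complex of a right-angled Coxeter group with nerve $L$ is always CAT(0), cocompact, and of dimension $1+\dim L$, taking $X_p=\Sigma_{L_p}$ automatically yields the $3$-dimensional CAT(0) cocompact model for $\underline{E}G_p$ from the statement.

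The heart of the proof is therefore the construction of the flag complex $L_p$. The starting point is a flag triangulation of a $2$-dimensional Moore space of type $(\BZ_p,1)$, whose cohomology satisfies $\tilde H^2(L_p;\BZ)\cong\BZ_p$ by the universal coefficient theorem. On top of this, three further combinatorial conditions must be arranged, by iterated barycentric subdivisions and small generic modifications: (i) $L_p$ contains no induced $4$-cycle in its $1$-skeleton, so that by Moussong's criterion $W_{L_p}$ is Gromov-hyperbolic; (ii) $L_p$ is connected with no separating simplex, which makes $W_{L_p}$ one-ended; (iii) the diagram $L_p^{(1)}$ is rigid in the sense of Laurence--Servatius (no dominated vertices, no SIL-pairs, trivial simplicial automorphism group), so that $\Out(W_{L_p})$ is finite. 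Any residual finite outer automorphism group can be absorbed by passing to the semidirect product $G_p:=W_{L_p}\rtimes\Aut(L_p)$ without affecting any of the other properties.

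The compactly supported cohomology is computed via the Davis--Leary formula
\[H^n_c(\Sigma_{L_p};\BZ)\cong \bigoplus_{w\in W_{L_p}} H^n(K,K^{S\setminus\mathrm{In}(w)}),\]
where $K$ is the Davis chamber (a cone on $L_p$) and $K^T\subset K$ is the sub-cone corresponding to the full subcomplex of $L_p$ on $T\subseteq S$. The summand at $w=1$ equals $\tilde H^{n-1}(L_p)$ and gives exactly the desired $\BZ_p$ in top degree $n=3$; the remaining summands vanish in degree $3$ provided every proper full subcomplex of $L_p$ is $2$-cohomologically trivial, a condition we can build into the construction by concentrating the $p$-torsion in a single attaching cell. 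An analogous Bredon-cohomology computation for the pair $(X_p,X_{p,\mathrm{sing}})$, via Theorem~\ref{th: bredon compact support} and the Davis chamber decomposition, extracts a contribution from the top chain group of $K$; its $\BQ$-rank is positive as soon as $L_p$ carries enough top-dimensional simplices, which is automatic for any reasonably large triangulation.

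Dimension rigidity of $G_p$ will follow from Corollary~\ref{coxdimrig}, applied to $W_{L_p}$, and transfers to the finite extension $G_p$. The main obstacle I expect is the simultaneous realisation of conditions (i)--(iii) in a flag triangulation of a $2$-dimensional Moore space with all proper full subcomplexes cohomologically trivial in dimension $2$. Engineering a single complex $L_p$ that satisfies all of these requirements at once --- flagness, Moussong's no-empty-square condition, Laurence--Servatius rigidity, and the delicate cohomological control on subcomplexes --- is the combinatorial bottleneck of the argument.
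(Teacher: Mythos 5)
Your overall architecture matches the paper's: take a flag triangulation $L_p$ of a two-dimensional Moore space with $\overline{\mathrm{H}}^2(L_p)\cong\BZ_p$, impose combinatorial conditions ensuring hyperbolicity, one-endedness and finite outer automorphism group of the right angled Coxeter group $W_{L_p}$, and use the Davis complex as the $3$-dimensional CAT(0) cocompact model. The genuine gap is in the one claim you dismiss as automatic, namely $\mathrm{H}^3_c(X_p,X_{p,\mathrm{sing}})\otimes_\BZ\BQ\neq 0$. For the group you construct this fails: if $G_p$ is $W_{L_p}$ itself (which is what you get once $\Aut(L_p)$ is trivial, as your condition (iii) demands), then the singular set of the Davis complex $\Sigma$ is $\bigcup_{w}wL'$, so $\Sigma\setminus\Sigma_{\mathrm{sing}}$ is a disjoint union of open cones on $L'$ and $\mathrm{H}^3_c(\Sigma,\Sigma_{\mathrm{sing}})\cong\bigoplus_{w\in W}\overline{\mathrm{H}}^{2}(L_p)\cong\bigoplus_{w}\BZ_p$, which is pure torsion. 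Adding top-dimensional simplices does not help: the relative group never sees the ``top chain group of $K$'', only $\overline{\mathrm{H}}^{2}(L_p)$ chamber by chamber. This is not cosmetic; the infinite-order class in $\mathrm{H}^3_c(X_p,X_{p,\mathrm{sing}})$ is exactly what Lemma \ref{lem: product vcd}(c) needs to force $\cdm(G_{p_1}\times\dots\times G_{p_r})=3r$ while the $\vcd$ drops, so without it the construction loses its purpose.

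The paper's remedy is to make the finite extension carry topological content instead of merely killing $\Out$. One starts with the $\BZ_p$-space $Z$ obtained from the cone on a regular $p$-gon by identifying boundary points in the same rotation orbit; its fixed set is a circle plus a point. Taking $L_p$ to be a $\BZ_p$-equivariant flag triangulation of $Z$ (a barycentric subdivision arranges all your conditions equivariantly) and setting $G_p=W_p\rtimes\BZ_p$, the singular set of the $G_p$-action is strictly larger than that of the $W_p$-action: the fundamental chamber satisfies $K_{\mathrm{sing}}=L'\cup_{L'_{\mathrm{sing}}}C(L'_{\mathrm{sing}})$, and excision plus the Mayer--Vietoris portion $\mathrm{H}^1(L)\to\mathrm{H}^1(L_{\mathrm{sing}})\to\mathrm{H}^2(K_{\mathrm{sing}})\cong\mathrm{H}^3(K,K_{\mathrm{sing}})$, with $\mathrm{H}^1(L)=0$ and $\mathrm{H}^1(L_{\mathrm{sing}})\cong\BZ$ coming from the fixed circle, produces the required infinite-order class. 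Two smaller remarks: Laurence--Servatius rigidity (dominated vertices, SIL pairs) is a right-angled Artin criterion; for Coxeter groups the correct inputs are Tits's theorem and M\"uhlherr's generation of $\Aut^0(W)$ by partial conjugations, i.e.\ connectivity of $\CG\setminus\St_\CG(s)$ together with control of $\Aut(\CS)$. And dimension rigidity does not formally ``transfer to'' the finite extension; one needs $W_{L_p}$ to be characteristic in $G_p$ (here guaranteed because $W_{L_p}$ is generated by involutions and $p$ is odd) so that any group containing $G_p$ as a finite-index normal subgroup still normalises $W_{L_p}$.
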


The group $G_p$ of Proposition \ref{groups-Gp} will be obtained as a semi-direct product
$$W_p\rtimes\BZ_p$$
of a dimension rigid right angled Coxeter group $W_p$ and the cyclic group $\BZ_p$ of order $p$. Before launching the proof of Proposition \ref{groups-Gp}, we use it to prove Theorem \ref{thm-bla} and Theorem \ref{thm-blabla} from the introduction. 

\begin{proof}[Proof of Theorem \ref{thm-bla}]
Let $G_3$ and $G_5$ be provided by Proposition \ref{groups-Gp}. Both groups are dimension rigid by assumption and it follows from Lemma \ref{lem: product vcd} (a) that $\vcd(G_3\times G_5)\le \dim X_3+\dim X_5-1$. Part (c) of the same lemma shows that $\cdm(G\times G')=\dim(X_3)+\dim(X_5)$. Moreover, since $W_3\times W_5$ is a finite index normal subgroup of $G_3 \times G_5$, we conclude that $W_3$ and  $W_5$ are dimension rigid, but $W_3\times W_5$ is not.
\end{proof}

\begin{proof}[Proof of Theorem \ref{thm-blabla}]
The same argument, applied to $G=G_{p_1}\times\dots\times G_{p_{r+1}}$ where $p_1,\dots,p_{r+1}$ are pairwise distinct odd primes, yields that
$$\vcd(G)\le\left(\sum_{i}\vcd(G_{p_i})\right)-r=\cdm(G)-r$$
Moreover, as we mentioned after the statement of Proposition \ref{groups-Gp}, each $G_{p_i}$ contains a certain right angled Coxeter group $W_{p_i}$ as a normal subgroup. It follows that the Coxeter group $W_{p_1}\times\dots\times W_{p_{r+1}}$ is a finite index normal subgroup in $G$.
\end{proof}

Now continue with the same notation and note that from part (b) in Lemma \ref{lem: product vcd} we get that $\vcd(G_p^n)=3n$ and that $H^{3n}(G_p^n,\BZ[G_p^n])\simeq\BZ_p$ where $G_p^n$ is the $n$-fold product of $G_p$ with itself. In particular, the same arguments we just used to prove Theorem \ref{thm-bla} and Theorem \ref{thm-blabla} can also be used to prove the following fact that we state here simply for further reference.

\begin{lem}\label{wiesollichesnennen}
Let $\{p_1,\ldots,p_r\}$ be a finite collection of pairwise distinct odd prime numbers and let $\{n_1,\ldots,n_r\}$ be a collection of natural numbers. Then
$$\mathrm{vcd}( \prod_{i=1}^r G^{n_i}_{p_i}) \leq 3\sum_{i=1}^{r}n_i -r +1,
\text{ but } 
\underline{\mathrm{cd}}(\prod_{i=1}^r G^{n_i}_{p_i})= 3\sum_{i=1}^{r}n_i.$$
Here $  G^{n_i}_{p_i}$ is the $n_i$-fold product with itself of the group $G_{p_i}$ provided by Proposition \ref{groups-Gp}. \qed
\end{lem}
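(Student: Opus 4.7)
The plan is to apply Lemma \ref{lem: product vcd} in two stages: first internally, to verify that each factor $G_{p_i}^{n_i}$ satisfies the hypotheses of Lemma \ref{lem: product vcd}, and then externally, to combine all $r$ such factors with pairwise coprime exponents.

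For the first stage, fix one prime $p=p_i$ and set $n=n_i$. I would check that the cocompact proper $G_p^n$-CW-complex $X_p^n$ has dimension $3n$, satisfies
$$\mathrm{H}_c^{3n}(X_p^n)\cong \BZ_p,$$
and that $\mathrm{H}_c^{3n}(X_p^n, (X_p^n)_{\mathrm{sing}})$ contains an element of infinite order. The first statement is already recorded in the text just before the lemma via iterated application of Lemma \ref{lem: product vcd}(b) with the K\"unneth formula (all exponents equal to $p$, so $\gcd=p$ at each step). For the second, I would invoke the Bredon K\"unneth formula used in the proof of Lemma \ref{lem: product vcd}(c): the top relative Bredon cohomology of $X_p^n$ is isomorphic to the $n$-fold tensor product of $\mathrm{H}_c^{3}(X_p, X_{p,\mathrm{sing}})$ with itself, and a tensor product of abelian groups each containing an infinite order element itself contains such an element.

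For the second stage, I would apply Lemma \ref{lem: product vcd} to the collection $\{G_{p_1}^{n_1},\ldots, G_{p_r}^{n_r}\}$ with cocompact models $X_{p_i}^{n_i}$ of dimensions $d_i=3n_i$ and exponents $p_i$. Since the $p_i$ are pairwise distinct odd primes, the exponents are pairwise coprime, so Lemma \ref{lem: product vcd}(a) yields
$$\mathrm{vcd}\Bigl(\prod_{i=1}^r G_{p_i}^{n_i}\Bigr)\le \sum_{i=1}^r 3n_i - r + 1,$$
while Lemma \ref{lem: product vcd}(c) gives
$$\cdm\Bigl(\prod_{i=1}^r G_{p_i}^{n_i}\Bigr)= \sum_{i=1}^r 3n_i,$$
which is exactly what the lemma asserts.

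The main obstacle is the bookkeeping in the first stage: one must verify that the top Bredon cohomology $\mathrm{H}^{3n}_c(X_p^n, (X_p^n)_{\mathrm{sing}})$ really does inherit an infinite order element from $\mathrm{H}^{3}_c(X_p, X_{p,\mathrm{sing}})$ at each step of the induction on $n$. This is where one needs the Bredon K\"unneth isomorphism under the finiteness hypotheses (type $FP_\infty$), which holds here because the $G_p$ admit cocompact models for proper actions. Once this is in place the two applications of Lemma \ref{lem: product vcd} assemble the statement with no further work.
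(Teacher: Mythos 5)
Your proposal is correct and follows essentially the same route as the paper: the paper's (implicit) proof is precisely to feed the powers $G_{p_i}^{n_i}$ through Lemma \ref{lem: product vcd}(b) and the Bredon K\"unneth formula to verify the hypotheses of Lemma \ref{lem: product vcd}, and then to apply parts (a) and (c) to the collection $\{G_{p_1}^{n_1},\ldots,G_{p_r}^{n_r}\}$ with the pairwise coprime exponents $p_1,\ldots,p_r$, exactly as in the proofs of Theorems \ref{thm-bla} and \ref{thm-blabla}. Your extra care with the infinite-order element in $\mathrm{H}_c^{3n_i}(X_{p_i}^{n_i},(X_{p_i}^{n_i})_{\mathrm{sing}})$ (equivalently, non-vanishing after $\otimes_{\BZ}\BQ$, which passes to tensor products) is precisely the bookkeeping the paper leaves implicit.
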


The remainder of this section is devoted to proving Proposition \ref{groups-Gp}. We start by reminding the reader some basic facts about right angled Coxeter groups. For more details and proofs we refer the reader to \cite{DavisBook}.

\subsection{Right angled Coxeter groups}
Let $\CG$ be a finite graph, $L$ the associated flag complex whose $1$-skeleton is $\mathcal{G}$, and set
$$d= \dim(L).$$ 
Denote the vertex and edge sets of $\CG$ by $V=V(\CG)$ and $E=E(\CG)$ respectively. The right angled Coxeter group determined by $\CG$ is the Coxeter group $W=W_\CG$ with presentation
\[       W = \langle V \ | \ s^2 \ \mbox{for all $s \in V$, and
  \ }   (st)^2 \ \mbox{if $(s,t) \in E$} \rangle .  \] 
Note that $W$ fits into the short exact sequence
\begin{equation} \label{eq: torsionfree kernel}  1 \rightarrow N \rightarrow W \xrightarrow{\pi} \bigoplus_{s \in V}\mathbb{Z}_2 \rightarrow 1
\end{equation} 
where $\pi$ takes $s \in V$ to the generator of the $\mathbb{Z}_2$-factor corresponding to $s$. A subset $J \subseteq V$ is called spherical if the subgroup $W_J=\langle J \rangle$ is finite. Equivalently, $J$ is spherical if it spans a simplex in $L$. The empty subset of $J$ is by definition spherical. If $J$ is spherical, then $W_J$ is called a spherical subgroup of $W$ and a coset $wW_J\in W/W_J$ with $J$ is spherical is called a spherical coset. 

Let $\CS$ be the poset of spherical subsets of $V$ ordered by inclusion, identify each $\sigma\in \mathcal{S}$ with the associated simplex of $L$ and let $L\setminus\sigma$ be its complement. Denote by $K$ the geometric realization of $\CS$ and note that $K$ equals the cone over the barycentric subdivision of $L$. In particular, $K$ is contractible. Let $W\CS$ be the poset of all spherical cosets ordered by inclusion and $\Sigma$ its geometric realization. The group $W$ acts by left multiplication on $W\CS$ and thus on $\Sigma$ such that $K$ is a strict fundamental domain for the action $W\actson\Sigma$. The simplicial complex $\Sigma$ is called the {\em Davis complex} of the Coxeter group $W$. It is a proper cocompact $W$-CW-complex. Moreover, since $\Sigma$ admits a complete $W$-invariant CAT(0)-metric, it follows that $\Sigma$ is a cocompact model for $\underline{E}W$ (see \cite[Th. 12.1.1 \& Th. 12.3.4]{DavisBook}). In particular, $\Sigma^{F}$ is non-empty for every finite subgroup $F$ of $W$, meaning that every finite subgroup of $W$ is sub-conjugate to a spherical subgroup of $W$. This implies that the kernel $N$ above is torsion-free, and hence that $W$ is virtually torsion-free.

It is known that the Coxeter group $W$ is Gromov-hyperbolic if and only if any cycle of length 4 in  $\CG$ has at least one diagonal (e.g.~see  \cite[Cor. 12.6.3]{DavisBook}).
The center of a right angled Coxeter group is trivial as long as $\CG$ has radius at least $2$, i.e.~there is no vertex which is connected to every other vertex by an edge. Finally, $W$ is one-ended if and only if $\mathcal{G}$ does not have a separating clique, i.e.~if the complement $L\setminus \sigma$ of every simplex $\sigma$ in $L$ is connected (see \cite[Th. 8.7.2]{DavisBook}).

Since the Coxeter group $W$ is virtually torsion-free, it has a well-defined virtual cohomological dimension $\vcd(W)$. In  \cite[Cor. 8.5.3]{DavisBook} Davis gives a formula to compute $\vcd(W)$ and in \cite[Th. 5.4.]{DMP} it is shown that the virtual cohomological dimension $\mathrm{vcd}(W)$ of $W$ always coincides with the Bredon cohomological $\underline{\mathrm{cd}}(W)$. Combing these references, we immediately obtain the following lemma, where $\overline{\mathrm{H}}^{\ast}$ denotes reduced cohomology with $\mathbb{Z}$-coefficients.

\begin{lem} \label{I should go home}
 If $\overline{\mathrm{H}}^{d}(L)\neq 0$ then 
$\mathrm{vcd}(W)=\underline{\mathrm{cd}}(W)=\dim(\Sigma)=d+1$. If moreover, $\overline{\mathrm{H}}^{d}(L\setminus\sigma)=0$ for every non-empty simplex $\sigma \in \mathcal{S}$, then $\mathrm{H}^{d+1}(W,\mathbb{Z}[W])\cong  \overline{\mathrm{H}}^{d}(L).$ \qed
\end{lem}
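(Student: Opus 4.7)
The plan is to combine Davis's computation of the compactly supported cohomology of the Davis complex $\Sigma$ with the DMP equality $\mathrm{vcd}(W) = \cdm(W)$ for right-angled Coxeter groups, and then extract both assertions as direct consequences.

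First I would record that $\dim \Sigma = d+1$: the $k$-simplices of $\Sigma$ correspond to chains of length $k+1$ in the poset $W\CS$, and a maximal chain is determined by a maximal flag of spherical subsets $\emptyset \subsetneq \{s_0\} \subsetneq \dots \subsetneq \{s_0,\dots,s_d\}$ coming from a top-dimensional simplex of $L$, so maximal simplices of $\Sigma$ have dimension $d+1$. Since $\Sigma$ is a cocompact model for $\underline{E}W$, formula \eqref{eq: vcd formula} gives the upper bound $\mathrm{vcd}(W) \leq d+1$.

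For the lower bound and the precise computation I would invoke Davis's formula \cite[Cor.~8.5.3]{DavisBook}, which in the right-angled case takes the shape
$$H^n_c(\Sigma) \;\cong\; \bigoplus_{w \in W} \overline{\mathrm{H}}^{\,n-1}(L \setminus \sigma(w)),$$
where $\sigma(w)$ is the simplex of $L$ spanned by $\mathrm{In}(w)=\{s\in V : \ell(ws)<\ell(w)\}$, with the convention $L\setminus\emptyset=L$. Taking $n=d+1$, the contribution at $w=1$ is $\overline{\mathrm{H}}^{\,d}(L)\neq 0$, so $\mathrm{vcd}(W)=d+1$. The equality $\cdm(W)=\mathrm{vcd}(W)$ is then exactly \cite[Th.~5.4]{DMP}, and combined with the upper bound from $\dim\Sigma$ this yields the chain $\mathrm{vcd}(W)=\cdm(W)=\dim(\Sigma)=d+1$.

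For the second statement, observe that for every $w\neq 1$ the set $\mathrm{In}(w)$ is non-empty, so $\sigma(w)$ is a non-empty simplex in $\CS$. Under the additional hypothesis $\overline{\mathrm{H}}^{\,d}(L\setminus\sigma)=0$ for every non-empty $\sigma\in\CS$, every summand with $w\neq 1$ in Davis's formula at $n=d+1$ vanishes, and the sum collapses to
$$\mathrm{H}^{d+1}(W,\mathbb{Z}[W])\;\cong\;H^{d+1}_c(\Sigma)\;\cong\;\overline{\mathrm{H}}^{\,d}(L),$$
where the first isomorphism uses that $W$ is virtually torsion-free and acts properly and cocompactly on the contractible $\Sigma$. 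The only potential obstacle is citing Davis's formula in exactly the right-angled form described above; once that is in hand the rest is pure bookkeeping, which is presumably why the authors label the result as being obtained ``immediately'' from the two references.
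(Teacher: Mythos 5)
Your argument is exactly the one the paper has in mind: it cites Davis \cite[Cor.~8.5.3]{DavisBook} (equivalently the decomposition $\mathrm{H}^n_c(\Sigma)\cong\bigoplus_{w\in W}\overline{\mathrm{H}}^{n-1}(L\setminus\sigma_{\mathrm{In}(w)})$ you write out) together with \cite[Th.~5.4]{DMP} for $\mathrm{vcd}(W)=\cdm(W)$, and your observation that $\mathrm{In}(w)\neq\emptyset$ for $w\neq 1$ is precisely what makes the sum collapse to $\overline{\mathrm{H}}^{d}(L)$ under the extra hypothesis. The proposal is correct and matches the intended proof.
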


Continuing with the same notation, let $\Aut(\CG)$ be the finite group of graph automorphism of $\CG$ and note that $\Aut(\CG)=\Aut(L)$ because $L$ is flag. Since every graph automorphism of $\CG$ naturally gives rise to a group automorphism of $W$, one can form the semi-direct product 
$$\Gamma= W \rtimes\Aut(\CG).$$ 
The action of the Coxeter group $W$ on the poset $W\CS$ extends to an action $\Gamma\actson W\CS$ via
\[     (w,\varphi)\cdot w_0W_J = w\varphi(w_0)W_{\varphi(J)}     \]
and $\varphi(\emptyset)=\emptyset$ for every $(w,\varphi) \in \Gamma$ and every $wW_J \in W\mathcal{S}$. This implies that $\Gamma$ acts properly isometrically and cocompactly on the Davis complex $\Sigma$ of $W$, extending the action of $W$. Note that, since $\Sigma$ is CAT(0), it follows again that $\Sigma$ is a cocompact model for $\underline{E}\Gamma$. We record this fact for later reference.
\begin{lem}\label{I am tired}
The action $W\actson\Sigma$ extends to a proper, cocompact, isometric action $\Gamma= W \rtimes\Aut(\CG)\actson\Sigma$. The Davis complex $\Sigma$ thus becomes a cocompact model for $\underline{E}\Gamma$.\qed
\end{lem}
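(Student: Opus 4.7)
The plan is to check that the prescribed formula really defines a group action of $\Gamma$ on the poset $W\CS$ that extends the given $W$-action, transfer it to the geometric realisation $\Sigma$, and then show that it inherits properness, cocompactness and the CAT(0) geometry from the $W$-action.

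First I would verify that $(w,\varphi)\cdot w_0W_J=w\varphi(w_0)W_{\varphi(J)}$ is well defined: since $\varphi\in\Aut(\CG)=\Aut(L)$ sends spherical subsets to spherical subsets and induces an automorphism of $W$ sending $W_J$ to $W_{\varphi(J)}$, the right hand side does not depend on the choice of coset representative. A direct computation using the semi-direct product multiplication shows that this defines an action of $\Gamma$ on the poset $W\CS$, and the action preserves inclusions, so it extends to a simplicial action on the geometric realisation $\Sigma$. When $\varphi=\mathrm{id}$ one recovers the usual left-multiplication action $W\actson\Sigma$, so this is indeed an extension.

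Next I would check the three adjectives. Cocompactness is immediate because $W\subset\Gamma$ already acts cocompactly on $\Sigma$ with strict fundamental domain $K$; since $\Aut(\CG)$ permutes the simplices of $K$, the quotient $\Gamma\bs\Sigma$ is still compact (in fact a quotient of $K$ by a finite group). Properness follows from the fact that $W$ is a finite index (normal) subgroup of $\Gamma$ on which the action is already proper: any compact $C\subset\Sigma$ meets only finitely many $W$-translates of itself, hence only finitely many $\Gamma$-translates. For isometricity, recall that the CAT(0) metric on $\Sigma$ is built by gluing Euclidean cells indexed by the spherical cosets $wW_J$, with isometry types depending only on $J$. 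Since $\Aut(\CG)$ permutes the spherical subsets and our extended action sends the cell indexed by $wW_J$ to the cell indexed by $w\varphi(w_0)W_{\varphi(J)}$, which is isometric to the first by a simplicial isometry, the action of $\Gamma$ is by isometries.

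Finally, to conclude that $\Sigma$ is a cocompact model for $\underline{E}\Gamma$, it only remains to check that $\Sigma^F$ is contractible for every finite subgroup $F\le\Gamma$. Since $\Gamma$ acts properly by isometries on the complete CAT(0) space $\Sigma$, the Bruhat-Tits fixed point theorem yields that $\Sigma^F$ is non-empty, closed and geodesically convex, hence contractible. Combined with the cocompactness and properness established above, this shows that $\Sigma$ is a cocompact model for $\underline{E}\Gamma$. I expect no real obstacle here: once the action is correctly defined on the poset, everything else is essentially a transfer of properties along the finite index inclusion $W\hookrightarrow\Gamma$ together with the CAT(0) geometry.
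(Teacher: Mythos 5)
Your proof is correct and follows essentially the same route as the paper, which treats this lemma as immediate from the preceding discussion: one defines the extended action on the poset of spherical cosets by $(w,\varphi)\cdot w_0W_J=w\varphi(w_0)W_{\varphi(J)}$, transfers properness and cocompactness along the finite-index inclusion $W\hookrightarrow\Gamma$, and invokes the CAT(0) fixed point theorem to see that $\Sigma^F$ is non-empty and convex, hence contractible, for every finite $F\le\Gamma$. You have simply spelled out the verifications that the paper leaves implicit.
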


\subsection{Automorphisms of right angled Coxeter groups}
Continuing with the same notation, we recall a few facts concerning the group of automorphisms $\mathrm{Aut}(W)$ of a right angled Coxeter group $W$. We begin with the following result by Tits.

\begin{sat}[Tits \cite{tits}]\label{th: tits}  
There is a split short exact sequence
\[ 1 \rightarrow \mathrm{Aut}^{0}(W) \rightarrow \mathrm{Aut}(W) \xrightarrow{\pi} \mathrm{Aut}(\mathcal{S}) \rightarrow 0, \]
where $\mathrm{Aut}^{0}(W)$ consists of those automorphisms of $W$ that map each generator $s\in V$ to a conjugate element, and where $\mathrm{Aut}(\mathcal{S})$ is the group of automorphisms of $\mathcal{S}$ viewed as a commutative groupoid under the operation of symmetric difference $u\Delta v=(u \setminus v)\cup (v \setminus u)$, defined whenever all three sets $u,v$ and $u\Delta v$ belong to $\CS$.
\end{sat}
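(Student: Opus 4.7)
The plan is to follow the three-step skeleton of Tits's argument: establish a rigidity statement for finite subgroups of $W$, use it to construct $\pi$ and identify its kernel as $\Aut^0(W)$, and finally produce a splitting using graph automorphisms of $\CG$.

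First I would record the rigidity statement already implicit in the Davis-complex discussion: every finite subgroup of $W$ is conjugate to some spherical $W_J$, and two spherical subgroups $W_J,W_{J'}$ are conjugate in $W$ if and only if $J=J'$. This is because $W_J$ is the full stabilizer of a distinguished cell of $\Sigma$ whose type (encoded by $J\subseteq V$) is a conjugation invariant. Given $\varphi\in\Aut(W)$ and $J\in\CS$, the image $\varphi(W_J)$ is finite, hence conjugate to a unique $W_{J'}$; set $\pi(\varphi)(J):=J'$ and $\pi(\varphi)(\emptyset):=\emptyset$. One checks using $\varphi^{-1}$ that $\pi(\varphi)$ is a bijection of $\CS$, and $\pi$ is manifestly a homomorphism. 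Its kernel consists of those $\varphi$ with $\pi(\varphi)(\{s\})=\{s\}$ for every $s\in V$, which translates exactly to $\langle\varphi(s)\rangle$ being conjugate to $\langle s\rangle$, i.e.~to $\varphi(s)$ being conjugate to $s$; this is $\Aut^0(W)$ by definition.

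Next I would show that $\pi(\varphi)$ respects the partial operation $\Delta$. The tool is the abelianization surjection $W\to\bigoplus_{s\in V}\BZ_2$ of \eqref{eq: torsionfree kernel}: spherical subsets $J\in\CS$ correspond bijectively to the $\BF_2$-subspaces spanned by their indexed generators in the target, and when $u,v,u\Delta v$ are all spherical, the subspace attached to $u\Delta v$ is the $\BF_2$-sum of the subspaces attached to $u$ and $v$. Since the commutator subgroup is characteristic, $\varphi$ descends to an automorphism of $\bigoplus_V\BZ_2$, and its action on these $\BF_2$-subspaces must coincide with $\pi(\varphi)$; hence $\pi(\varphi)$ preserves $\Delta$. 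For the splitting, the tautological permutation action of $\Aut(\CG)=\Aut(L)$ on generators defines a section $s:\Aut(L)\to\Aut(W)$, and I would verify that $\pi\circ s$ is an isomorphism $\Aut(L)\xrightarrow{\sim}\Aut(\CS)$: any groupoid automorphism of $\CS$ permutes singletons and preserves spherical doubletons (i.e.~edges of $\CG$), hence restricts to a flag-complex automorphism of $L$, and is conversely determined by that restriction because every $J\in\CS$ is the iterated $\Delta$-sum of its singletons.

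The main obstacle I expect is the $\Delta$-compatibility step, since $\Delta$ is an operation on \emph{subsets} while $\varphi$ naturally acts on \emph{subgroups}. The abelianization manoeuvre above bridges this gap, but rigorously justifying it requires showing that the uniqueness of spherical representatives survives passage to $\bigoplus_V\BZ_2$ — equivalently that the torsion-free kernel $N$ meets each spherical $W_J$ trivially, so that $W_J$ embeds isomorphically into the abelianization as the $\BF_2$-subspace indexed by $J$. Once this is done, the remaining verifications are formal.
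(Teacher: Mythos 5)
First, note that the paper does not prove this statement at all: it is quoted as Tits's theorem from \cite{tits}, and the text following it only describes the maps $\pi$ and $i$. So any proof here is necessarily being compared against Tits's original argument rather than against the paper.

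Your proposal contains a genuine error at the splitting step. You claim that every groupoid automorphism of $\CS$ ``permutes singletons and preserves spherical doubletons, hence restricts to a flag-complex automorphism of $L$,'' and you build the section through $\Aut(\CG)$. This is false in general: an element of $\Aut(\CS)$ need not send singletons to singletons. Take $W=\langle s,t\mid s^2,t^2,(st)^2\rangle\cong\BZ_2\times\BZ_2$; then $\CS$ is all of $2^{\{s,t\}}$, $(\CS,\Delta)\cong(\BF_2^2,+)$, and $\Aut(\CS)\cong\GL_2(\BF_2)$ has order $6$, while $\Aut(\CG)\cong\BZ_2$. Here $\Aut(W)\cong\GL_2(\BF_2)\cong\Aut(\CS)$ and $\Aut^0(W)=1$, so Tits's sequence is correct but your section $\Aut(\CG)\to\Aut(W)$ does not surject onto $\Aut(\CS)$ under $\pi$. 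The correct section is the one the paper records: $i(\varphi)(s)=\prod_{t\in\varphi(\{s\})}t$, which is defined precisely because $\varphi(\{s\})$ may fail to be a singleton. The identification $\Aut(\CS)=\Aut(\CG)$ is the content of the paper's separate Lemma \ref{lem: groupoid}, which needs the extra hypothesis that every non-maximal simplex of $L$ lies in at least two maximal simplices (violated in the example above); it is not part of Tits's theorem and cannot be assumed here.

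Two further points feed into the same problem. Your opening rigidity claim, that every finite subgroup of $W$ is \emph{conjugate} to a spherical $W_J$, is false (in the example above $\langle st\rangle$ is not conjugate to any $W_J$); only sub-conjugacy holds, which is exactly why $\pi(\varphi)(J)$ must be defined as the \emph{minimal} spherical $T$ with $\varphi(W_J)$ sub-conjugate to $W_T$, and why $\pi(\varphi)$ can increase singletons to larger spherical sets. And in the $\Delta$-compatibility step, the assertion that the subspace attached to $u\Delta v$ is the sum of the subspaces attached to $u$ and $v$ fails already for $u=v$; the linear-algebra avatar of $\Delta$ is addition of indicator vectors $e_u+e_v=e_{u\Delta v}$, and showing $\bar\varphi(e_J)=e_{\pi(\varphi)(J)}$ in the abelianization requires more than matching the spans. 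These gaps would all need to be repaired along the lines of Tits's actual argument before the sketch becomes a proof.
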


\noindent Given an automorphism $\varphi \in \mathrm{Aut}(W)$, $\pi(\varphi) \in \mathrm{Aut}(\mathcal{S})$ is defined as follows:  for a $J \in \mathcal{S}$, we note that $\varphi(W_J)$ is finite and hence there exist a unique minimal $T=\pi(\varphi)(J) \in \mathcal{S}$ such that $\varphi(W_J)$ is sub-conjugate to $W_T$. Conversely, if $\varphi \in \mathrm{Aut}(\mathcal{S})$, then we one can define an automorphism $i(\varphi)$ of $W$ by setting $i(\varphi)(s)=\prod_{t\in \varphi(s)}t$ for every $s \in V$.  This yields a homomorphism $i: \mathrm{Aut}(\mathcal{S})\rightarrow \mathrm{Aut}(W)$ such that $\pi\circ i=\mathrm{Id}$. 
\medskip

We will return shortly to the group $\Aut(\CS)$, but first note that the group $\Inn(W)$ of inner automorphisms $W$ is contained in $\Aut^0(W)$. M\"{u}hlherr proved that $\Aut^0(W)$ is generated by partial conjugations. Before making this precise, recall that $V$ is the set of vertices of $\CG$ and that the star $\mathrm{St}_{\CG}(s)$ of $s\in V$ is the set of edges adjacent to $s$.

\begin{sat}[M\"{u}hlherr \cite{Mul}]\label{th: mul} 
The group $\mathrm{Aut}^{0}(W)$ is generated by automorphism of the form $\varphi_{s,U}$, where $s\in V$, $U$ is a connected component of $\CG\setminus \mathrm{St}_{\CG}(s)$ and $\varphi_{s,U}(t)=sts$ if $t \in U$ and $\varphi_{s,U}(t)=t$ if $t \notin U$.
\end{sat}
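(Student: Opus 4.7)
The plan is to induct on a complexity measure on $\Aut^0(W)$. For $\varphi \in \Aut^0(W)$ and each generator $s \in V$, choose $g_s \in W$ of minimal word length with $\varphi(s) = g_s s g_s^{-1}$, and set $c(\varphi) = \sum_{s \in V}\ell(g_s)$. The base case $c(\varphi)=0$ is immediate: every $g_s$ is trivial, so $\varphi$ fixes each generator and hence $\varphi=\Id$. The entire proof then reduces to showing that whenever $c(\varphi)>0$, one can find a partial conjugation $\varphi_{r,U}$ as in the statement such that $c(\varphi \circ \varphi_{r,U})<c(\varphi)$; the theorem then follows by induction, together with the (easy) observation that the partial conjugations themselves all belong to $\Aut^0(W)$.

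To produce such a partial conjugation, I would pick a generator $s$ with $g_s \neq e$, write $g_s = r g_s'$ in reduced form, and take $U$ to be the connected component of $\CG \setminus \St_\CG(r)$ containing $s$. For $t \in V \setminus (U \cup \{r\})$, any neighbor of $U$ outside $U$ must lie in $\St_\CG(r)$, so $t$ and $r$ commute (or $t=r$), and composing $\varphi$ with $\varphi_{r,U}$ does not affect $g_t$. For $t \in U$, one computes
\[
\varphi \circ \varphi_{r,U}(t) = \varphi(rtr) = g_r r g_r^{-1}\,g_t t g_t^{-1}\,g_r r g_r^{-1},
\]
and the task is to show that once the new conjugators are put into reduced form, their total length drops strictly. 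The motivating case is when $g_r = e$: the new conjugator for $t \in U$ is then simply $r g_t$, whose length drops by one when $g_t$ begins with $r$ and grows by one otherwise, so it remains to argue that the shortenings outweigh the lengthenings.

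The main obstacle is precisely controlling this balance: a priori, shortening $g_s$ could inflate $g_t$ for some other $t \in U$, so $r$ and $U$ must be chosen globally rather than greedily. The technical heart is therefore a combinatorial lemma guaranteeing the existence of a coherent choice --- intuitively, that one can simultaneously peel off a common initial letter from all $g_t$ with $t \in U$ that need shortening, without creating new ones elsewhere. The cleanest way I would set this up is via the wall structure on the Davis complex $\Sigma$: each $\varphi \in \Aut^0(W)$ permutes the walls of $\Sigma$ within their $W$-orbits, and a partial conjugation $\varphi_{r,U}$ acts geometrically by flipping $\Sigma$ across the wall fixed by $r$ on the side corresponding to $U$. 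A short geometric argument should then produce a wall across which $\varphi$ can be ``un-flipped,'' yielding the partial conjugation that reduces complexity and closing the induction.
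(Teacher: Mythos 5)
First, a point of order: the paper does not prove this statement. It is quoted as Theorem~\ref{th: mul} directly from M\"uhlherr's article \cite{Mul} and used as a black box, so there is no in-paper argument to compare yours against; your attempt has to stand on its own as a proof of M\"uhlherr's theorem.

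As written, it does not close. The overall scheme --- set $c(\varphi)=\sum_{s\in V}\ell(g_s)$, observe that $c(\varphi)=0$ forces $\varphi=\Id$, and induct by composing with partial conjugations --- is the right general shape (it is a length-reduction argument of Whitehead/peak-reduction type, which is indeed the flavour of the known proofs). But the entire mathematical content of the theorem sits in the one step you leave open: that whenever $c(\varphi)>0$ there \emph{exists} a partial conjugation $\varphi_{r,U}$ with $c(\varphi\circ\varphi_{r,U})<c(\varphi)$, i.e.\ that $c$ has no local minima other than the identity. You correctly observe that the greedy choice (let $r$ be the first letter of some reduced $g_s$ and $U$ the component of $s$ in $\CG\setminus\St_\CG(r)$) can fail, since every $t\in U$ whose conjugator does not begin with $r$ contributes $+1$; but the ``combinatorial lemma'' and the ``short geometric argument'' with walls of the Davis complex that are supposed to rescue this are named, not formulated, and certainly not proved. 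That is precisely where \cite{Mul} does its work, so the induction does not go through. Two smaller issues also need attention before the setup is even well posed: $g_s$ is only determined up to right multiplication by the (generally infinite) centralizer $C_W(s)$, so ``$g_t$ begins with $r$'' must be phrased as a condition on the whole coset $g_tC_W(t)$ and the complexity $c$ defined via minima over these cosets; and one should verify (easy, but necessary) that $\varphi_{r,U}$ really defines an automorphism lying in $\Aut^0(W)$, which uses that $U$ is a full component of $\CG\setminus\St_\CG(r)$.
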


Note in particular that M\"uhlherr's theorem implies that $\Aut^0(W)=\Inn(W)$ as long as $\CG\setminus\St_\CG(s)$ is connected for every vertex $s\in V$. Since $L$ is the flag complex associated to $\CG$, this is the case if and only if $L\setminus\St_{L}(s)$ is connected  for every $s$. We conclude the following.

\begin{lem}\label{lem-inn=aut0}
$\Inn(W)=\Aut^{0}(W)$ whenever $L\setminus\St_{L}(s)$ is connected for every $s\in V$.\qed
\end{lem}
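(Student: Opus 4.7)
The plan is to show that under the stated hypothesis every M\"uhlherr generator $\varphi_{s,U}$ of $\mathrm{Aut}^0(W)$ is in fact an inner automorphism, namely conjugation by $s\in V$. Since the opposite inclusion $\mathrm{Inn}(W)\subseteq\mathrm{Aut}^0(W)$ is noted just before Theorem \ref{th: mul}, this will suffice.

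First I would translate the connectivity assumption into graph-theoretic language. The vertex set of $\mathrm{St}_L(s)$ is $\{s\}\cup N_\CG(s)$, where $N_\CG(s)$ denotes the neighbours of $s$ in $\CG$, since $L$ is flag. Hence $L\setminus\mathrm{St}_L(s)$ is the full subcomplex supported on $V\setminus(\{s\}\cup N_\CG(s))$, and its connectivity is equivalent to the connectivity of the induced subgraph of $\CG$ on that same vertex set. Thus the hypothesis says that, after removing $s$ and its neighbours, the graph $\CG$ remains connected, so $\CG\setminus\mathrm{St}_\CG(s)$ has a single connected component $U_s$ of interest, namely $U_s=V\setminus(\{s\}\cup N_\CG(s))$ (any remaining components consist of isolated vertices in $\{s\}\cup N_\CG(s)$, whose associated partial conjugations act trivially on the corresponding generators anyway).

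Next I would verify that $\varphi_{s,U_s}$ agrees with the inner automorphism $c_s\colon w\mapsto sws$ on every generator $t\in V$, which by rigidity of automorphisms on generating sets implies equality as automorphisms of $W$. The three cases are immediate from the Coxeter presentation: if $t=s$, then $c_s(t)=sss=s=t$ and $t\notin U_s$, so $\varphi_{s,U_s}(t)=t$; if $t\in N_\CG(s)$, then $(st)^2=1$ yields $sts=t$, so $c_s(t)=t=\varphi_{s,U_s}(t)$; and if $t\in U_s$, then by definition $\varphi_{s,U_s}(t)=sts=c_s(t)$. Combined with Theorem \ref{th: mul}, this shows $\mathrm{Aut}^0(W)$ is generated by inner automorphisms and hence equals $\mathrm{Inn}(W)$.

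There is no real obstacle here; the only thing to watch is the mild ambiguity in what counts as a ``connected component'' of $\CG\setminus\mathrm{St}_\CG(s)$, which is handled by the observation that components sitting inside $\{s\}\cup N_\CG(s)$ give generators that are either trivial or still coincide with $c_s$ on the relevant generators.
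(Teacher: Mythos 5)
Your proof is correct and is exactly the argument the paper intends: under the hypothesis each vertex $s$ contributes a single M\"uhlherr generator $\varphi_{s,U}$ with $U=\CG\setminus\St_\CG(s)$, and checking on generators (using $s^2=1$ and $sts=t$ for $t$ adjacent to $s$) shows this is conjugation by $s$, so Theorem \ref{th: mul} gives $\Aut^0(W)\subseteq\Inn(W)$. The paper leaves these details implicit; you have simply written them out.
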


Returning to the other factor, note that the group $\Aut(\CG)$ of graph automorphism of $\CG$ is contained in $\Aut(\CS)$. We prove next that both groups actually agree as long as the graph $\CG$ satisfies some not very stringent conditions.

\begin{lem} \label{lem: groupoid} If $L$ is a finite flag complex such that every non-maximal simplex of $L$ is contained in at least $2$ maximal simplices, then $\mathrm{Aut}(\mathcal{S})=\mathrm{Aut}(\mathcal{G})$.
\end{lem}

Before we start the with the proof of the lemma, note that the condition of the lemma is equivalent that asking that $L$ is a finite flag complex such that every simplex of $L$ is uniquely determined by the set of those maximal simplices it is contained in. Indeed, assume that every non-maximal simplex is contained in at least $2$ maximal simplices. Now let $v$ be a non-maximal simplex, let $M=\{m_1,\ldots,m_r\}$ be the set of distinct maximal simplices that contain $v$ and assume that $v$ is not uniquely determined by $M$, i.e.~$v \subsetneq \bigcap_{i=1}^r m_i$. Denote $w=\bigcap_{i=1}^r m_i$ and let $w'$ be the simplex of $m_1$ determined by all vertices of $v$ and all vertices of $m_1$ that are not vertices of $w$. Then $w'$ is a proper face of $m_1$ that contains $v$ and such that that vertices of $w'$ and $w$ span $m_1$. By our assumptions, $w'$ must be contained in another maximal simplex besides $m_1$. Since $w'$ contains $v$ this other maximal simplex must be some $m_j$, with $j\neq 1$. But since $w$ is contained in $m_j$, this means that $m_1$ is contained in $m_j$. Hence $m_1=m_j$ by maximality, which is a contradiction.  On the other hand, if there is a non-maximal simplex that is contained in exactly one maximal simplex, then that non-maximal simplex is certainly not uniquely determined by the set of those maximal simplices it is contained in.

\begin{proof}

As $\Aut(L)=\Aut(\CG)\subset\Aut(\CS)$, it suffices to prove that any arbitrary $\phi\in\Aut(\CS)$ induces an automorphism of the complex $L$. Since $\mathcal{S}$ is the set of all simplices of $L$, it suffices to show that $\phi$ preserves inclusions.

Recall that the composition in the groupoid $\CS$ is given by the symmetric difference $u\Delta v=(u \setminus v)\cup (v \setminus u).$ More precisely, if $u,v,u\Delta v\in\CS$ then the composition $u*v$ in $\CS$ of $u$ and $v$ is defined as $u*v=u\Delta v$. The basic observation to keep in mind is the following easily checked fact.
\begin{quote}
Given $u,v \in \mathcal{S}$ we have that $u \Delta v \in \mathcal{S}$ if and only if $u \cup v \in \mathcal{S}$.
\end{quote}
Now suppose that $u,v\in\CS$ are such that $u\subset v$. Then $u*w$ is defined for every  $w\in\CS$ for which $v*w$ is defined. Moreover, if $u\neq v$ then, by the condition on $L$, there is some maximal simplex which contains $u$ but does not contain $v$. It follows that $u*w$ is defined for an strictly larger set of $w$'s than $v*w$. On the other hand, if $u$ is not contained in $v$ there is by the condition in the lemma some maximal simplex $w$ with $v\Delta w\in\CS$ but $u\Delta w\notin\CS$. In other words we have
$$u\subsetneq v\text{ iff }\{w\in\CS\vert u\Delta w\in\CS\}\supsetneq\{w\in\CS\vert v\Delta w\in\CS\}.$$
The automorphism $\varphi\in\Aut(S)$ preserves the right side of this equivalence and thus also the left one. In other words, $\varphi$ preserves inclusions between simplices, as we needed to prove.
\end{proof}

We now come to the main observation we will need below.

\begin{prop}\label{need coffee} 
Let $L$ be a connected finite flag complex such that
\begin{itemize}
\item every non-maximal simplex of $L$ is contained in at least two maximal simplices,
\item the graph $\mathcal{G}=L^1$ has radius at least $2$, and 
\item $\CG\setminus\St_\CG(s)$ is connected for every vertex of $\CG$. 
\end{itemize}
If we denote the right angled Coxeter group determined by $\mathcal{G}$ by $W$, then
\[   \mathrm{Aut}(W)= W \rtimes \mathrm{Aut}(\mathcal{G})  \]
 and every extension of a finite group by $W$ acts properly, isometrically and cocompactly on the Davis complex $\Sigma$ of $W$.
\end{prop}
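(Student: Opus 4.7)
The plan is to combine Tits' theorem (Theorem~\ref{th: tits}) with the lemmas already established in this section to compute $\Aut(W)$, and then handle the extension statement by passing through the conjugation homomorphism into $\Aut(W)$.

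First, I would apply Theorem~\ref{th: tits} to obtain the split exact sequence
\[ 1 \to \Aut^0(W) \to \Aut(W) \to \Aut(\CS) \to 1. \]
The first hypothesis on $L$ is exactly what Lemma~\ref{lem: groupoid} requires, so $\Aut(\CS) = \Aut(\CG)$. The third hypothesis is what is needed for Lemma~\ref{lem-inn=aut0}, giving $\Aut^0(W) = \Inn(W)$. The second hypothesis (that $\CG$ has radius at least $2$) implies, by the fact recalled just before Lemma~\ref{I should go home}, that $Z(W) = 1$; hence $\Inn(W) \cong W$. Substituting these three identifications into the Tits sequence yields a split exact sequence
\[ 1 \to W \to \Aut(W) \to \Aut(\CG) \to 1, \]
which canonically identifies $\Aut(W)$ with the semi-direct product $\Gamma = W \rtimes \Aut(\CG)$.

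For the second assertion, suppose $1 \to W \to H \to F \to 1$ is an extension with $F$ finite. Conjugation yields a homomorphism $\phi \colon H \to \Aut(W) = \Gamma$. Its kernel is the centralizer $C_H(W)$, which satisfies $C_H(W) \cap W = Z(W) = 1$; hence $C_H(W)$ injects into $H/W = F$ and is therefore finite. The image $\phi(H)$ contains $\phi(W) = \Inn(W) = W$, and $\phi(H)/W$ is a quotient of $F$, hence finite. Lemma~\ref{I am tired} supplies a proper, cocompact, isometric action $\Gamma \actson \Sigma$, and the induced action $H \actson \Sigma$ via $\phi$ is then automatically isometric, cocompact (because $\phi(H) \supseteq W$ and $W \actson \Sigma$ is already cocompact), and proper (each point stabilizer in $H$ is an extension of a finite subgroup of $\Gamma$ by the finite group $\ker \phi$).

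The argument is essentially bookkeeping: each of the three hypotheses on $L$ is used exactly once to activate one of the preceding lemmas, and the extension statement becomes automatic once $\Aut(W) = \Gamma$ has been identified. The only spot that requires a moment's care is verifying that the conjugation map $\phi \colon H \to \Gamma$ has finite kernel and that $W$ has finite index in $\phi(H)$, both of which hinge on the triviality of $Z(W)$ already used in the first half.
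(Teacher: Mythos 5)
Your proposal is correct and follows essentially the same route as the paper: use the radius-$\ge 2$ hypothesis to get $Z(W)=1$, combine Tits' theorem with Lemma~\ref{lem-inn=aut0} and Lemma~\ref{lem: groupoid} to identify $\Aut(W)=W\rtimes\Aut(\CG)$, and then push any finite extension of $W$ into $\Aut(W)$ via conjugation with finite kernel so that Lemma~\ref{I am tired} applies. Your spelled-out verification that $\ker\phi=C_H(W)$ is finite and that the induced action stays proper and cocompact is exactly the step the paper leaves implicit.
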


\begin{proof} 
Recall first that the assumption that $\CG$ has at least radius $2$ implies that the center of $W$ is trivial and hence that we can identify the latter group with the subgroup $\Inn(W)$ of $\Aut(W)$ of inner automorphisms. That fact that $W$ has trivial kernel implies that every extension of a finite group by $W$ maps into $\Aut(W)$ with finite kernel. Hence it suffices to prove that $\Aut(W)$ itself acts properly, isometrically and cocompactly on the Davis complex $\Sigma$. To see that this is the case, note that under the given assumptions, Lemma \ref{lem-inn=aut0}, Lemma \ref{lem: groupoid} and Theorem \ref{th: tits} imply that
$\Aut(W)=\Inn(W)\rtimes\Aut(\CS)=W\rtimes\Aut(\mathcal{G})$.
The proposition now follows directly from Lemma \ref{I am tired}.
\end{proof}

Combining Proposition \ref{need coffee} and Lemma \ref{I should go home}, we obtain the following.

\begin{kor}\label{coxdimrig}
Suppose that $L$ is a $d$-dimensional flag complex that satisfies the conditions from Proposition \ref{need coffee} and such that $\overline{\mathrm{H}}^{d}(L)\neq 0$. Then the right angled Coxeter group $W$ determined by the 1-skeleton $L^1$ of $L$ is dimension rigid.\qed
\end{kor}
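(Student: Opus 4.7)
The plan is to show directly that every finite-index normal overgroup $\Gamma$ of $W$ realises its virtual cohomological dimension as a geometric dimension, by exhibiting a model for $\underline{E}\Gamma$ of the right dimension. All the pieces are already set up.

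First I would let $\Gamma$ be any group containing $W$ as a finite index normal subgroup, and note that the quotient $\Gamma/W$ is finite. Thus $\Gamma$ fits into a short exact sequence $1\to W\to \Gamma\to F\to 1$ with $F$ finite. By Proposition \ref{need coffee}, applied under the standing hypotheses on $L$, such a $\Gamma$ acts properly, isometrically and cocompactly on the Davis complex $\Sigma$ of $W$. Since $\Sigma$ is CAT(0), it is a cocompact model for $\underline{E}\Gamma$. This immediately gives the upper bound
\[
\gdim(\Gamma)\le \dim(\Sigma)=d+1,
\]
where the last equality uses the hypothesis $\overline{\mathrm{H}}^{d}(L)\neq 0$ together with Lemma \ref{I should go home}.

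Next I would get the matching lower bound. Because $W$ has finite index in $\Gamma$ and $\Gamma$ is virtually torsion-free (any torsion-free finite-index subgroup of $W$ is still of finite index in $\Gamma$), the virtual cohomological dimension is preserved: $\vcd(\Gamma)=\vcd(W)$. By Lemma \ref{I should go home}, the hypothesis $\overline{\mathrm{H}}^{d}(L)\neq 0$ yields $\vcd(W)=d+1$. Combining with the general inequality $\vcd(\Gamma)\le\gdim(\Gamma)$, we obtain
\[
d+1=\vcd(\Gamma)\le\gdim(\Gamma)\le d+1,
\]
so equality $\vcd(\Gamma)=\gdim(\Gamma)$ holds, which is exactly the definition of dimension rigidity.

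There is no real obstacle, since Proposition \ref{need coffee} already provides the crucial geometric input (the Davis complex is a cocompact proper $\Gamma$-CW model of the correct dimension) and Lemma \ref{I should go home} pins down $\vcd(W)$. The only thing to verify carefully is that the hypotheses of Proposition \ref{need coffee} on $L$ are indeed inherited by $\Gamma$: the hypothesis is really about $L$, not about $\Gamma$, and the proposition then tells us that \emph{every} finite extension of $W$ acts properly cocompactly on $\Sigma$, which is precisely the setting we need.
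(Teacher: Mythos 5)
Your argument is correct and is exactly the paper's (implicit) proof: Proposition \ref{need coffee} gives the cocompact CAT(0) model $\Sigma$ for $\underline{E}\Gamma$ of dimension $d+1$, and Lemma \ref{I should go home} pins $\vcd(\Gamma)=\vcd(W)=d+1$, forcing $\vcd(\Gamma)=\gdim(\Gamma)$. Nothing to add.
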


\subsection{Proof of Proposition \ref{groups-Gp}}\label{subsec-examples}

We are now ready to prove Proposition \ref{groups-Gp}. To do so, we need to construct for each odd prime number $p$ a group $G_p$. So fix an odd prime $p$. We start considering the action of  the cyclic group $\BZ_p$ by rotations on a regular $p$-gon $A$ and let $P$ be the cone of $A$. In more concrete terms, $P$ is nothing more than a regular $p$-gon considered as a 2-dimensional object. The action $\BZ_p\actson A$ extends to a simplicial action $\BZ_p\actson P$. Now let $Z$ be the quotient of $P$ obtained by identifying points of $A\subset P$ that lie in the same $\BZ_p$-orbit. Note that the space $Z$ is still a $\BZ_p$-CW complex and that the singular set
\[  Z_{{\mathrm{sing}}}= \{ x \in Z \ | \ \mathrm{Stab}_{C_{p}}(x) \neq \{e\} \}   \]
is the disjoint union of the circle $A/C_p$ and a point coming form the center of $P$. Moreover, 
\[        \mathrm{H}^1(Z)=0\text{ and }\mathrm{H}^2(Z)=\BZ_p.    \] 
Now $\BZ_{p}$-equivariantly subdivide the cellular structure on $Z$ to obtain an flag triangulation $L$ of $Z$ whose $1$-skeleton $\CG_p$ has at least radius $2$, such that every cycle of length 4 in $\CG_p$ has at least one diagonal, such that the complement $\CG_p\setminus\St_{\CG_p}(s)$ of the star of every vertex $s$ of $\CG_p$ is connected and such that $\mathcal{G}$ does not have any separating cliques. For instance, one can take $L$ to be the third barycentric subdivision of $Z$. All this implies that the right angled Coxeter group $W_p$ determined by $\CG_p$ is Gromov-hyperbolic, center-free and one-ended. Since we clearly also have that every non-maximal simplex of $L$ is contained in at least $2$ maximal simplices, it follow from Proposition \ref{need coffee} that $W_p$ is dimension rigid with finite $\Out(W_p)$ such that every finite extension of $W_p$ acts isometrically, properly and cocompactly on the Davis complex $\Sigma$ of $W_p$. This applies in particular to the semi-direct product
$$G_p=W_p\rtimes\BZ_p$$
arising from the action $\BZ_p\actson L$. Moreover, The CAT(0)-property of $\Sigma$ ensures that $\Sigma$ is a cocompact model for $\underline EG_p$.

Since $L\setminus \sigma$ is homotopic to a one-dimensional space, for any non-empty simplex $\sigma$ of $L$, it follows from Lemma \ref{I should go home} that $\vcd(G_p)=\underline{\mathrm{gd}}(G_p)=3$ and 
\[    \mathrm{H}^3_c(\Sigma)=\mathrm{H}^3(G_p,\mathbb{Z}[G_p])=\mathrm{H}^2(L)=\mathbb{Z}_p.    \]

Since the group $G_p$ contains $W_p$ as a subgroup of finite index, one can immediately conclude that $G_p$ has trivial center, is Gromov-hyperbolic and one-ended. Moreover, by \cite[Lemma 5.4]{GL}, we have that $\mathrm{Out}(G_p)$ is also finite. Since $W_p$ is generated by elements of order $2$ and $p$ is an odd prime, it follows that $W_p$ is not only normal but also characteristic in $G_p$. This implies that if a group $\Gamma$ contains $G_p$ as a normal subgroup of finite index, $W_p$ is also normal in $\Gamma$. We can therefore conclude that $G_p$ is also dimension rigid.

It remains to show that $\mathrm{H}^3_c(\Sigma,\Sigma_{\mathrm{sing}})\otimes_{\mathbb{Z}} \mathbb{Q} \neq 0$, where $\Sigma$ is viewed as a $G_p$-space. Let $K$ be the cone over the barycentric subdivision $L'$ of $L$ and recall that $K$ is a strict compact fundamental domain for the $W_p$-action on $\Sigma$. Moreover, $L'$ is a strict fundamental domain for the singular set of $\Sigma$ viewed as a $W_p$-space. Using these observation and denoting $K_{{\mathrm{sing}}}=\Sigma_{\mathrm{sing}}\cap K $ we deduce that
\[    \Sigma_{\mathrm{sing}}= \bigcup_{w\in W} wK_{\mathrm{sing}}    \]
and

\begin{equation} \label{eq: Ksing}K_{{\mathrm{sing}}}= L' \cup_{L'_{{\mathrm{sing}}}} C(L'_{{\mathrm{sing}}}) \end{equation}
where
\[   L_{{\mathrm{sing}}}=   \{ x \in L \ | \ \mathrm{Stab}_{C_{p}}(x) \neq \{e\} \}       \]
is homeomorphic to $Z_{{\mathrm{sing}}}$ and hence to the disjoint union of a circle and point, and $C(-)$ denotes taking the cone with same conepoint as $K=C(L')$. Now note that the map 
\[     \mathrm{H_c^{\ast}}(\Sigma,\Sigma_{\mathrm{sing}}\cup \bigcup_{w \in W\setminus \{e\}} wK ) \rightarrow \mathrm{H}^{\ast}_c(K,K_{\mathrm{sing}}),   \]
induced by the inclusion of pairs
\[   (K,K_{\mathrm{sing}}) \rightarrow (\Sigma,\Sigma_{\mathrm{sing}}) \rightarrow (\Sigma,\Sigma_{\mathrm{sing}}\cup \bigcup_{w \in W\setminus \{e\}} wK ) ,     \]
precomposed with the isomorphism
\[        \mathrm{H}^{\ast}_c(K,K_{\mathrm{sing}}) \rightarrow     \mathrm{H}_c^{\ast}(\Sigma,\Sigma_{\mathrm{sing}}\cup \bigcup_{w \in W\setminus \{e\}} wK ),   \]
obtained by cellular excision, is the identity map. This implies that the by inclusion induced map

\[    \mathrm{H}^{\ast}_c(\Sigma,\Sigma_{\mathrm{sing}}) \rightarrow   \mathrm{H}^{\ast}_c(K,K_{\mathrm{sing}})        \]
is surjective. Hence it suffices to show that $\mathrm{H}^{3}(K,K_{\mathrm{sing}}) \otimes_{\mathbb{Z}}\mathbb{Q}\neq 0  $. Using the fact that $K$ is acyclic we conclude that
\[   \mathrm{H}^{3}(K,K_{\mathrm{sing}}) \cong \mathrm{H}^2(K_{\mathrm{sing}}).  \]
Moreover, the Mayer-Vietoris long exact cohomology sequence associated to (\ref{eq: Ksing}) yields an exact sequence

\[   \mathrm{H}^1(L) \rightarrow  H^{1}(L_{\mathrm{sing}}) \rightarrow  \mathrm{H}^{2}(K_{\mathrm{sing}}) .    \]
Since  $\mathrm{H}^1(L)=0$ and $H^{1}(L_{\mathrm{sing}})\cong \mathbb{Z}$, we have $\mathrm{H}^{2}(K_{\mathrm{sing}})\otimes_{\mathbb{Z}}\mathbb{Q}\neq 0$. This concludes the proof of Proposition \ref{groups-Gp}.\qed

\section{The outer space of a free product} \label{sec: outer space}

Suppose that $G_1,\dots,G_r$ are non-trivial, centre-free, finitely generated, and pairwise non-isomorphic one-ended groups, and let
$$G=G_1*\dots*G_r$$
be their free product. Note that the groups $G_1,\dots,G_r$ are infinite and not abelian, and that $G$ is centre-free. In this section we recall a few features of the {\em outer space $\CO$ of free splittings} of $G$ and of the associated action $\Out(G)\actson\CO$. We refer to \cite{GL} (and also \cite{Clay}) for details.

\subsection{Outer space}\label{sec-tired}
As a set, $\CO=\CO(G)$ is the set of all equivariant isometry classes of minimal $G$-actions $G\actson T$ on metric simplicial trees $T$ with trivial edge stabilizers and such that all vertex stabilizers are either trivial or conjugate to one of the groups $G_i$. The triviality of edge stabilizers of the action $G\actson T$ implies that for every $i=1,\dots,r$, there is a unique $G$-orbit of points in $T$ with stabilizer conjugate to $G_i$. We denote the corresponding vertex in $G\backslash T$ by $[G_i]$ and refer to $[G_1],\dots,[G_r]$ as the {\em special} vertices of $G\backslash T$. Note that $G\backslash T$ is a finite tree and that by minimality, all vertices of degree 1 and 2 represent vertices in $T$ with non-trivial stabilizer.

The group $\Aut(G)$ acts on $\CO$ by precomposition, that is 
$$(\phi,\{G\actson T,\ (g,x)\mapsto g\cdot x\})\mapsto\{G\actson T,\ (g,x)\mapsto \phi(g)\cdot x\}.$$
The group $\Inn(G)$ of inner automorphisms acts trivially and hence we obtain a well-defined action $\Out(G)\actson\CO(G)$. If we endow $\CO$ with the equivariant Gromov-Hausdorff topology, then $\Out(G)\actson\CO$ is continuous.

The action $\Out(G)\actson\CO$ is far from being free. In fact, in \cite{GL}, Guirardel and Levitt prove that the stabilizer $\Stab(T)=\Stab_{\Out(G)}(T)$ of $T\in\CO$ is isomorphic to the product (see the remark below)
\begin{equation}\label{gl-stab}
\Stab(T)\simeq\prod_{i=1}^rM_{\deg_{G\bs T}[G_i]}(G_i)
\end{equation}
where $\deg_{G\bs T}[G_i]$ is the degree of the special vertex $[G_i]$ in $G\bs T$ and where 
$$M_{\deg_{G\bs T}[G_i]}=G_i^{\deg_{G\bs T}[G_i]-1}\rtimes\Aut(G_i).$$ 
Note that $M_{\deg_{G\bs T}[G_i]}(G_i)$ also fits into the exact sequence
$$1\to G_i^{\deg_{G\bs T}[G_i]}\to M_{\deg_{G\bs T}[G_i]}(G_i)\to\Out(G_i)\to 1.$$
In particular, it follows that if $\mathrm{Out}(G_i)$ is finite for all $i=1,\ldots,r,$ then $\mathrm{Stab}(T)$ has a finite index subgroup isomorphic to $\prod_{i=1}^rG_i^{\deg_{G\bs T}[G_i]}$.

\begin{bem}
In \cite{GL}, it is only claimed that $\prod_{i=1}^rM_{\deg_{G\bs T}[G_i]}(G_i)$ has finite index in $\Stab(T)$. In the present setting we have equality because we are assuming that the groups are pairwise non-isomorphic.
\end{bem}

Observe that that the stabilizer \eqref{gl-stab} lifts to $\Aut(G)$. We will need this fact for 
$$\deg_{G\bs T}[G_1]=r-1\text{ and }\deg_{G\bs T}[G_2]=\dots=\deg_{G\bs T}[G_r]=1$$ 
and describe briefly the lifting in this concrete situation. For each $i$ let $X_i$ be an Eilenberg-Mac Lane space of type $K(G_i,1)$. Fix $r-1$ distinct points $p_1^2,\dots,p_1^r\in X_1$ and a further point $p_i\in X_i$ for each $i\ge 2$. The space 
$$Z=(X_1\sqcup X_2\sqcup\dots\sqcup X_r)/_{p_i\sim p_1^i\text{ for }i\ge 2}$$
is an Eilenberg-Mac Lane space of type $K(G,1)$ and $M_{r-1}(G_1)\times M_1(G_2)\times\dots\times M_1(G_r)$ is isomorphic to the group of self-homotopy equivalences of $Z$ which map each piece $X_i$ to itself. Since each such homotopy equivalence also fixes the points $p_i=p_1^i$ we get the desired action of $M_{r-1}(G_1)\times M_1(G_2)\times\dots\times M_1(G_r)$ on $G=\pi_1(Z,p_2)$ and thus a homomorphism making the following diagram commute
$$\xymatrix{ & \Aut(G) \ar[d] \\
M_{r-1}(G_1)\times M_1(G_2)\times\dots\times M_1(G_r)\ar[r]\ar[ru] &\Out(G)}$$

Finally, recall that in \cite{GL}, Guirardel and Levitt prove that the outer space $\CO$ is contractible. To do show, they choose some base point $T_0$ and give for all $T\in\CO$ a path $t\mapsto T(t)$ in $\CO$ with $T(0)=T_0$ and $T(1)=T$. It is immediate from their construction \cite[p. 698]{GL} that the whole path is invariant under a subgroup $K\subset\Out(G)$ if both endpoints $T_0$ and $T$ are. In other words, the fixed point set $\CO^K$ is either empty or contractible. In Section \ref{sec: finite ext} we will argue that $\CO^K$ is non-empty and hence contractible when $F$ is a finite subgroup of $\mathrm{Out}(G)$.

\subsection{The spine of outer space}
We now describe the spine $\CS$ of $\CO$ together with its simplicial structure. To begin with let $\CP\CO$ be the set of all $G$-trees $T\in\CO$ such the quotient tree $G\actson T$ has total length $1$. The set $\CP\CO$ is preserved by the action of $\Out(G)$ and in fact there is an $\Out(G)$-equivariant retraction from $\CO$ to $\CP\CO$. In particular, $\CP\CO$ is also contractible.

Given any $G$-tree $T\in\CP\CO$, let $\Delta(T)\subset\CP\CO$ be the set of $G$-trees $T'\in\CP\CO$ which are $G$-equivariantly homeomorphic to $T$, and note that a tree $T'\in\CP\CO$ belongs to $\Delta(T)$ if and only if we can obtain $T'$ from $T$ by equivariantly changing the lengths of the edges. In particular, we can parametrize $\Delta(T)$ by the open simplex $\{\ell\in(0,\infty)^{E(T/G)}\vert\sum_e\ell_e=1\}$ where $E(T/G)$ is the set of edges of the graph $T/G$. Allowing some edges to have length $0$, we have that every vector $\ell\in\{\ell\in\BR_+^{E(T/G)}\vert\sum_e\ell_e=1\}$ in the closed simplex determines a semi-distance $d_\ell$ on $T/G$ and hence a $G$-invariant semi-distance (still denoted $d_\ell$) on $T$. The associated metric space $T_\ell=T/_{d_\ell=0}$ is the $G$-tree obtained by collapsing all degenerate edges to points. The tree $T_\ell$ belongs to $\CP\CO$ if and only if all paths in $T/G$ joining special vertices $[G_i]$ and $[G_j]$ have positive $d_\ell$-length. In other words, the closure $\bar\Delta(T)$ of $\Delta(T)$ in $\CP\CO$ can be identified with the complement in the closed simplex $\{\ell\in\BR_+^{E(T/G)}\vert\sum_e\ell_e=1\}$ of some of its faces. Note that for all $T'\in\sigma_T$ we have that $\bar\Delta(T')\subset\bar\Delta(T)$ is a ``face" of $\bar\Delta(T)$, in the sense that it is the intersection of $\bar\Delta(T)$ with a face of $\{\ell\in\BR_+^{E(T/G)}\vert\sum_e\ell_e=1\}$. 

The sets $\bar\Delta(T)$ behave almost like the simplices of a simplicial structure on $\CP\CO$, but not quite. In order to get an honest such structure one proceeds as follows. Consider first for each $T$ the first barycentric subdivision of the closed simplex $\{\ell\in\BR_+^{E(T/G)}\vert\sum_e\ell_e=1\}$ and let $\sigma_T$ be the subcomplex thereof consisting of simplices which are contained in $\bar\Delta(T)$. The union 
$$\CS=\bigcup_{T\in\CP\CO}\sigma_T\subset\CP\CO$$
has the structure of a simplicial complex, called the {\em spine} of $\CP\CO$. By construction, $\CS$ is an $\Out(G)$-equivariant retract of $\CP\CO$. It follows that the spine $\CS$ is contractible and $\Out(G)$-invariant such that the $\Out(G)$ action on $\CS$ is simplicial and cocompact. Finally, note that for every cell $\sigma$ of $\CS$ contained in $\bar\Delta(T)$,  one has 
\begin{equation}\label{eq-spine-cell-dimension}
\dim\sigma\le\vert E(T/G)\vert-r+1.
\end{equation}

\subsection{Computing dimensions of $\Out(G)$}
Our next goal is to prove the following.

\begin{prop}\label{prop-dimensions}
Let $p_1,\dots,p_r$ be pairwise distinct odd primes, for each $i$ let $G_i=G_{p_i}$ be the group provided by Proposition \ref{groups-Gp}, and set 
$$G=G_1*\dots* G_r.$$
Then $\mathrm{Out}(G)$ is virtually torsion-free with $$\vcd(\Out(G)))\le 5r-5 \ \ \mbox{but} \ \  6r-6 \le \cdm(\Out(G)).$$
\end{prop}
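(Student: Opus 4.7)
My approach is to analyze the action $\Out(G)\actson\CS$ on the spine of outer space: a cellular spectral sequence will yield the $\vcd$ upper bound, while exhibiting one tree whose stabilizer has maximal $\cdm$ will give the lower bound. For virtual torsion-freeness I intend to appeal to the structure theorem for $\Out$ of a free product of freely indecomposable non-cyclic groups with finite outer automorphism groups, which presents $\Out(G)$ as a finite extension of the Fouxe-Rabinovitch subgroup, known to be torsion-free in this setting.

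The heart of the argument is a single combinatorial estimate. Given $T\in\CO$, let $k$ be the number of non-special vertices of $G\bs T$. Then $|E(G\bs T)|=r+k-1$ and, since minimality forces each non-special vertex to have degree $\ge 3$, a double count of vertex-degrees gives
\[ \sum_{i=1}^r\deg_{G\bs T}[G_i]\le 2(r+k-1)-3k=2r-2-k. \]
By \eqref{gl-stab}, $\Stab(T)$ admits a finite-index subgroup isomorphic to $\prod_{i=1}^r G_i^{\deg_{G\bs T}[G_i]}$, so Lemma \ref{wiesollichesnennen} (using that the $p_i$ are pairwise distinct) yields
\[ \vcd(\Stab(T))\le 3\sum_{i}\deg_{G\bs T}[G_i]-r+1\le 5r-5-3k. \]
On the other hand, \eqref{eq-spine-cell-dimension} gives $\dim\sigma\le k$ for any simplex $\sigma\subset\bar\Delta(T)$. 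Since $\Stab(\sigma)\cap\Stab(T)$ has finite index in $\Stab(\sigma)$ and sits inside $\Stab(T)$, I conclude
\[ \dim\sigma+\vcd(\Stab(\sigma))\le k+(5r-5-3k)=5r-5-2k\le 5r-5. \]

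To convert this into the $\vcd$ bound, I fix a torsion-free finite-index subgroup $\Gamma\le\Out(G)$ and exploit the cellular spectral sequence associated to the contractible cocompact $\Gamma$-CW complex $\CS$,
\[ E_1^{p,q}=\bigoplus_{\sigma\in\Gamma\bs\CS^{(p)}}H^q(\Stab_\Gamma(\sigma),\BZ[\Gamma])\Longrightarrow H^{p+q}(\Gamma,\BZ[\Gamma]). \]
Restriction makes $\BZ[\Gamma]$ a free $\BZ[\Stab_\Gamma(\sigma)]$-module, so $E_1^{p,q}$ vanishes whenever $q>\vcd(\Stab(\sigma))$ with $\dim\sigma=p$. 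The combinatorial bound above then forces $E_1^{p,q}=0$ for $p+q>5r-5$, whence $\vcd(\Out(G))=\cdim(\Gamma)\le 5r-5$.

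For the lower bound on $\cdm$, I choose $T_0\in\CO$ with $G\bs T_0$ having no non-special vertex at all (e.g., a path $[G_1]{-}[G_2]{-}\cdots{-}[G_r]$). Then $k=0$, $\sum_i\deg_{G\bs T_0}[G_i]=2r-2$, and the finite-index subgroup $\prod_iG_i^{\deg_{G\bs T_0}[G_i]}$ of $\Stab(T_0)$ satisfies $\cdm=3(2r-2)=6r-6$ by Lemma \ref{wiesollichesnennen}(c). Since any model for $\underline E\Out(G)$ restricts to a model for $\underline E\Stab(T_0)$, subgroup monotonicity of $\cdm$ gives $\cdm(\Out(G))\ge 6r-6$. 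The main obstacle is the combinatorial trade-off between cell dimension and stabilizer $\vcd$ parametrized by $k$: the whole argument hinges on Lemma \ref{wiesollichesnennen}(a) improving the naive bound by exactly $r-1$, which is precisely what is required to absorb the contribution of non-special vertices to $\dim\sigma$ and deliver $5r-5$ rather than $6r-6$.
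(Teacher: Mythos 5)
Your argument follows the same route as the paper's: the equivariant cohomology spectral sequence for $\Out(G)\actson\CS$ gives the upper bound on $\vcd$, and an explicit reduced tree whose stabilizer has $\cdm=6r-6$ gives the lower bound (your path tree versus the paper's star tree is immaterial, since both have total special degree $2r-2$). Your double-counting proof of $\sum_i\deg_{G\bs T}[G_i]\le 2r-2-\dim\sigma$ is in fact a cleaner derivation of the paper's Lemma \ref{stab-bound}, which instead collapses the forest spanned by the trivially stabilized vertices; your two-line count ($2|E(G\bs T)|=2(r+k-1)$ minus at least $3$ per non-special vertex, together with $\dim\sigma\le|E(G\bs T)|-r+1=k$ from \eqref{eq-spine-cell-dimension}) yields exactly the same estimate. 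One small point of hygiene: the tree to which the count should be applied is the one whose open cell contains $\sigma\setminus\partial\sigma$ (equivalently, the barycentre of $\sigma$), as in the paper's formulation of Lemma \ref{stab-bound}; then $\Stab(\sigma)\subset\Stab(T_\sigma)$ outright and no finite-index fudging is needed.

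The one genuine flaw is your justification of virtual torsion-freeness. The Fouxe--Rabinovitch subgroup generated by partial conjugations is \emph{not} torsion-free in this setting: each $G_i$ contains involutions $s$ (the $G_{p_i}$ are built from right angled Coxeter groups), and for $i\neq j$ the partial conjugation of $G_j$ by $s\in G_i$ squares to the identity yet is not inner, since it restricts to the identity on $G_i$ and the centralizer of $G_i$ in $G$ is trivial; this produces $2$-torsion in that subgroup. The conclusion that $\Out(G)$ is virtually torsion-free is nevertheless correct, but it must be sourced differently --- the paper invokes \cite[Th. 6.1]{GL}. Since your spectral-sequence argument passes to a torsion-free finite-index subgroup $\Gamma\le\Out(G)$, this fact is load-bearing and needs a valid justification rather than the (false) torsion-freeness of the partial-conjugation subgroup.
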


Before launching the proof of Proposition \ref{prop-dimensions} we need to establish the following combinatorial lemma.

\begin{lem}\label{stab-bound}
Suppose that $G_1,\dots,G_r$ are non-trivial, centre-free, finitely generated, and pairwise non-isomorphic one-ended groups, consider their free product $G=G_1*\dots*G_r$, and let $\CO=\CO(G)$ be the associated outer space. We have
$$\sum_i {\deg_{G\bs T}[G_i]} \le 2r-2-\dim(\sigma)$$
for every cell $\sigma$ in the spine $\CS$ of $\CO$ and for every tree $T \in \sigma \setminus \partial \sigma$.
\end{lem}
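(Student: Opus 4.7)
The plan is to combine three elementary ingredients: the Euler-characteristic formula for the finite tree $G\backslash T$, the handshake lemma applied to its vertices, and the dimension bound \eqref{eq-spine-cell-dimension} (equation (4.1)). Let me set notation. Since the only possible non-trivial vertex stabilizers of $G\actson T$ are conjugates of the $G_i$, and since each conjugacy class gives a single $G$-orbit of vertices, the quotient $G\backslash T$ is a finite tree with exactly $r$ special vertices $[G_1],\dots,[G_r]$, together with some number $k\ge 0$ of non-special vertices.

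First I would note that, as recorded in Section \ref{sec-tired}, minimality of the action forces every vertex of $G\backslash T$ of degree 1 or 2 to have non-trivial stabilizer; hence every non-special vertex of $G\backslash T$ has degree at least $3$. Next, since $G\backslash T$ is a tree with $r+k$ vertices, the Euler formula gives $|E(G\backslash T)|=r+k-1$. Applying the handshake lemma,
$$\sum_{i=1}^{r}\deg_{G\backslash T}[G_i]+\sum_{v\text{ non-special}}\deg_{G\backslash T}(v)=2\,|E(G\backslash T)|=2(r+k-1),$$
and estimating the second sum from below by $3k$ yields
$$\sum_{i=1}^{r}\deg_{G\backslash T}[G_i]\;\le\;2(r+k-1)-3k\;=\;2r-2-k.$$

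The final step is to bound $k$ from below in terms of $\dim\sigma$. Since $T$ lies in the relative interior $\sigma\setminus\partial\sigma$, its combinatorial type is exactly that of the top face in the chain of faces of $\bar\Delta(T)$ defining the simplex $\sigma$ in the barycentric subdivision; consequently \eqref{eq-spine-cell-dimension} applies to this $T$ and gives
$$\dim\sigma\;\le\;|E(G\backslash T)|-r+1\;=\;k.$$
Substituting $k\ge\dim\sigma$ into the previous inequality produces the desired bound $\sum_i\deg_{G\backslash T}[G_i]\le 2r-2-\dim\sigma$.

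The only subtlety I anticipate is precisely the bookkeeping step that connects $\dim\sigma$ to the edge count $|E(G\backslash T)|$: one must take $T$ in the interior of $\sigma$ so that its combinatorial type corresponds to the largest set in the chain of faces producing $\sigma$, which is exactly the hypothesis made in the lemma. Beyond this routine check, the argument is purely combinatorial and the three ingredients fit together immediately.
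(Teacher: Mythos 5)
Your proof is correct, and it uses the same three ingredients as the paper's argument: the degree bound on trivial-stabilizer vertices coming from minimality, an edge count for the finite tree $G\backslash T$, and the dimension bound \eqref{eq-spine-cell-dimension} applied to the tree $T$ in the relative interior of $\sigma$. The difference is purely one of execution: where you apply the handshake lemma directly to $G\backslash T$ (writing $|E(G\backslash T)|=r+k-1$ with $k$ the number of non-special vertices, so that $\sum_i\deg[G_i]\le 2(r+k-1)-3k$ and $\dim\sigma\le k$), the paper reaches the same two estimates by a longer route, collapsing the forest of trivial-stabilizer vertices to produce auxiliary trees $T'$ and $T''$ and counting connected components; in their notation $|E(F)|+|V_0|$ is exactly your $k$. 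Your version is a genuine streamlining and loses nothing; the one point that deserves the care you gave it is the justification that \eqref{eq-spine-cell-dimension} applies to the particular $T\in\sigma\setminus\partial\sigma$, which holds because the relative interior of a simplex of the barycentric subdivision lies in the open face corresponding to the top element of its defining chain, so $\sigma\subset\bar\Delta(T)$.
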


\begin{proof}
Given any $T\in\sigma\setminus\D\sigma$, let $F\subset G\bs T$ be the forest spanned by the vertices of $G\bs T$ corresponding to vertices of $T$ with trivial $G$-stabilizer. Let $T'\in\CO$ be the tree obtained by collapsing each component of the preimage under $T\to G\bs T$ of $F$ and note that
\begin{equation}\label{eq-being is hungry}
\sum_i {\deg_{G\bs T}[G_i]}=\sum_i {\deg_{G\bs T'}[G_i]}.
\end{equation}
Let $V_0\subset V(G\bs T')$ be the set of those vertices of $G\bs T'$ corresponding to vertices in $T'$ with trivial stabilizer and note that by construction we have $T'\setminus V_0=T\setminus F$. For each $v\in V_0$ choose an edge $e_v\in E(G\bs T')$ adjacent to $v$. If we collapse all the edges in $T'$ which represent one of the edges in $\{e_v\vert v\in V_0\}$, we get a tree $T''$ which is still in $\CO$ but where no further edge can be collapsed. It means that $G\bs T''$ has exactly $r-1$ edges, and hence that $G\bs T'$ and $G\bs T$ have respectively
$$\vert E(G\bs T')\vert=\vert V_0\vert+r-1\text{ and }\vert E(G\bs T)\vert=\vert E(F)\vert+\vert V_0\vert+r-1$$
edges, where $E(F)\subset E(G\bs T)$ is the set of those edges of $G\bs T$ contained in $F$. From \eqref{eq-spine-cell-dimension} we get that the cell $\sigma$ satisfies 
$$\dim\sigma\le \vert E(F)\vert+\vert V_0\vert.$$
Note at this point that, since each vertex of $F$ has degree at least 3, it follows that $T'\setminus V_0$ has at least $3\vert V_0\vert+\vert E(F)\vert$ connected components. Every such component is the union of a subtree of $T'\bs G$ where all vertices are of type $[G_i]$ together with some additional edges joining the subtree to $V_0$. Let $s$ be the total number of additional edges and note that there are at least as many such edges as connected component of $T'\setminus V_0$, so
$$s\ge 3\vert V_0\vert+\vert E(F)\vert.$$
Putting all of this together we have
\begin{align*}
\sum_{i=1}^r\deg_{G\bs T'}([G_i])&=\left(\sum_{v\in V(G\bs T')}\deg_{G\bs T'}(v)\right)-s\\
&=2\vert E(G\bs T')\vert-s\\
&\le 2\vert V_0\vert+2r-2-(3\vert V_0\vert+\vert E(F)\vert)\\
&=2r-2-\vert V_0\vert-\vert E(F)\vert\\
&\le 2r-2-\dim(\sigma).
\end{align*}
The lemma now follows from \eqref{eq-being is hungry}.
\end{proof}

We are now ready to prove Proposition \ref{prop-dimensions}.

\begin{proof}[Proof of Proposition \ref{prop-dimensions}]
First note that it follows from \cite[Th. 6.1]{GL} that $\mathrm{Out}(G)$ is virtually torsion-free and has finite virtual cohomological dimension. Since the spine $\mathcal{S}$ is contractible and $\mathrm{Out}(G)$ acts simplicially and cocompactly on $\mathcal{S}$ with stabilizers that are of type $FP_{\infty}$, it follows that $\mathrm{Out}(G)$ is also of type $FP_{\infty}$. We conclude that 
\[    \mathrm{vcd}(\mathrm{Out}(G))= \max\{ n \in \mathbb{N} \ | \ \mathrm{H}^n(\mathrm{Out}(G),\mathbb{Z}[\mathrm{Out}(G)])\neq 0 \} < \infty.    \]

Denoting $M=\mathbb{Z}[\mathrm{Out}(G)]$, the action of $\mathrm{Out}(G)$ on $\mathcal{S}$ gives rise to is a convergent spectral sequence (e.g. see \cite[Ch.VII.7 ]{brown})
\[  E_1^{p,q}   = \prod_{\sigma \in \Delta^p} \mathrm{H}^q(\mathrm{Stab}(\sigma),M) \rightarrow \mathrm{H}^{p+q}(\mathrm{Out}(G),M). \]
Here $\Delta^p$ is a set of of representatives of $\mathrm{Out}(G)$-orbits of $p$-simplices of $\mathcal{S}$. 

Suppose that we have a simplex $\sigma\in\CS$. By the discussion earlier, the group $\Stab(\sigma)\subset\Out(G)$ has a finite index subgroup isomorphic to $\prod_{i=1}^rG_i^{\deg_{G\bs T}[G_i]}$, where $\deg_{G\bs T}[G_i]$ is the degree of the vertex $[G_i]$ in $G\bs T$. We get from Lemma \ref{wiesollichesnennen} that 
$$\vcd(\Stab(\sigma))=\vcd\left(\prod_{i=1}^rG_i^{\deg_{G\bs T}[G_i]}\right)\le 3\left(\sum_{i=1}^{r}\deg_{G\bs T}[G_i]\right) -r +1.$$
Now, taking into account the bound $\sum_i {\deg_{G\bs T}[G_i]} \le 2r-2-\dim(\sigma)$ provided by Lemma \ref{stab-bound}, we get that
$$\vcd(\Stab(\sigma))\le 5r-5-3\dim(\sigma).$$
It follows in particular that
$$\vcd(\Stab(\sigma))+\dim(\sigma)\le 5r-5.$$

Since  $\mathrm{H}^q(\mathrm{Stab}(\sigma),M)=0$ for all $q>\vcd(\Stab(\sigma))$, we conclude that $E_1^{p,q}=0$ for all $p$ and $q$ with $p+q > 5r-5$. This implies automatically that $E_k^{p,q}=0$ for all $k$ and $p+q>5r-5$. The convergence of the spectral sequence therefore implies that
$$\mathrm{H}^{p+q}(\mathrm{Out}(G),M)=0$$
whenever $p+q>5r-5$, which proves that $\vcd(\Out(G)))\le 5r-5$.

Note now that there is a $G$-tree $T\in\CS$ such that $[G_1],\dots,[G_r]$ are the only vertices of $G\bs T$, where $[G_2],\dots,[G_r]$ are leaves and where $[G_1]$ is connected to $[G_i]$ for all $i\ge 2$. It follows that $n_1=r-1$ and $n_2=\dots=n_r=1$. In particular, $\Stab(T)$ contains a copy of the direct product $G_1^{r-1}\times G_2\times\dots\times G_r$. The second claim of Lemma \ref{wiesollichesnennen} implies that
$$\cdm(\Out(G))\ge\cdm(\Stab(T))\ge\cdm(G_1^{r-1}\times G_2\times\dots\times G_r)=6r-6$$
which is what we needed to prove.
\end{proof}

\section{Dimension rigidity and free products}

The aim of this section is to prove the following proposition and to finalize the proof of the main theorem.

\begin{prop} \label{thm-free-product-rigid}
Let $G_1,\ldots,G_r$ be a collection of one-ended, finitely presented, virtually torsion-free, pairwise non-isomorphic groups. If the groups $G_i$ are dimension rigid, then their free product $G=G_1\ast \ldots \ast G_r$ is dimension rigid as well. 
\end{prop}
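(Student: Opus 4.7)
My plan is to show that any group $\Gamma$ containing $G=G_1\ast\dots\ast G_r$ as a finite-index normal subgroup acts simplicially and cocompactly on a tree $T$ whose edge stabilizers are finite and whose non-finite vertex stabilizers are finite extensions of conjugates of the factors $G_i$. Dimension rigidity of each $G_i$ then forces each vertex stabilizer to satisfy $\gdim=\vcd$, and the standard graph-of-spaces construction produces a cocompact model for $\underline{E}\Gamma$ of dimension at most $\vcd(\Gamma)$, from which dimension rigidity of $\Gamma$ follows.

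The case $r=1$ is trivial, so assume $r\ge 2$; then $G$ is centre-free. The conjugation map $\Gamma\to\Aut(G)$ has finite kernel $C_\Gamma(G)$ (since $Z(G)=1$ and $[\Gamma:G]<\infty$), and its image modulo $\Inn(G)\cong G$ is a finite subgroup $\bar\Gamma\subset\Out(G)$. I would then invoke the non-emptiness of $\CO^{\bar\Gamma}$ — the fixed-point result of this section — to pick a $G$-tree $T\in\CO^{\bar\Gamma}$. Because $r\ge 2$ and the $G_i$ are pairwise non-isomorphic, the $G$-equivariant automorphism group of $T$ is trivial, and since $Z(G)=1$ the cohomological obstruction $H^2(\bar\Gamma,Z(G))$ vanishes; thus the outer action of $\bar\Gamma$ on $T$ lifts uniquely to an honest action of $\hat\Gamma:=\Gamma/C_\Gamma(G)$ extending the given $G$-action. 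Precomposing with the quotient $\Gamma\to\hat\Gamma$ yields a simplicial cocompact $\Gamma$-action on $T$.

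The next step is to identify the stabilizers. Edge stabilizers are finite, as the $G$-edge-stabilizers are trivial. For a vertex $v$ with $G$-stabilizer $hG_ih^{-1}$, the $\Gamma$-stabilizer equals the normalizer $N_\Gamma(hG_ih^{-1})=hN_\Gamma(G_i)h^{-1}$. The decisive identities are
\[ N_\Gamma(G_i)\cap G=N_G(G_i)=G_i\quad\text{and}\quad N_\Gamma(G_i)\cdot G=\Gamma,\]
the first because a one-ended $G_i$ fixes a unique vertex of the Bass--Serre tree of $G$, and the second because for every $\gamma\in\Gamma$ the one-ended subgroup $\gamma G_i\gamma^{-1}$ is contained in some $G$-conjugate of some $G_j$ by the theorem on one-ended subgroups of free products, and the maximality of each $G_j$ among one-ended subgroups combined with pairwise non-isomorphism then forces $\gamma G_i\gamma^{-1}$ to be exactly a $G$-conjugate of $G_i$. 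Consequently $N_\Gamma(G_i)/G_i\cong\Gamma/G$ is finite, and dimension rigidity of $G_i$ gives $\gdim(N_\Gamma(G_i))=\vcd(N_\Gamma(G_i))$. Vertices with trivial $G$-stabilizer have finite $\Gamma$-stabilizer.

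Finally, I would apply the graph-of-spaces construction to the $\Gamma$-tree $T$, gluing cocompact minimal-dimensional models for $\underline{E}$ of all vertex and edge stabilizers. Since finite edge stabilizers contribute at most $1$-dimensional pieces and each non-trivial vertex stabilizer $N_\Gamma(G_i)$ contributes a piece of dimension $\vcd(N_\Gamma(G_i))\le\vcd(\Gamma)$, the resulting $\Gamma$-CW-complex is a cocompact model for $\underline{E}\Gamma$ of dimension at most $\vcd(\Gamma)$. Combined with the trivial inequality $\vcd(\Gamma)\le\gdim(\Gamma)$ this yields $\gdim(\Gamma)=\vcd(\Gamma)$. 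The main obstacle is the construction in paragraph~2: producing a genuine $\Gamma$-action on some $T\in\CO$ rather than only an outer action, which uses both the non-emptiness of $\CO^{\bar\Gamma}$ and the pairwise non-isomorphism of the $G_i$ to trivialise $\Aut_G(T)$ and kill the cohomological obstruction.
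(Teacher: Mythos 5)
Your argument is correct and follows essentially the same route as the paper: reduce to a cocompact $\Gamma$-tree with finite edge stabilizers whose infinite vertex stabilizers are finite extensions of (conjugates of) the $G_i$, then combine dimension rigidity of the factors with a graph-of-spaces construction to produce a model for $\underline{E}\Gamma$ of dimension $\vcd(\Gamma)$. The only differences are organizational: the paper obtains the $\Gamma$-action on the tree by applying Dunwoody's accessibility theorem directly to the finitely presented, infinitely-ended group $\Gamma$ (indeed, its proof that $\CO^{F}\neq\emptyset$ is exactly this argument applied to a finite extension of $G$), so your detour through $\Out(G)$ and the lifting of the outer action is an unnecessary complication, and the paper packages the final dimension count as an induction over an iterated amalgam over finite groups (Lemma \ref{free-gdim}) rather than a single gluing over the whole tree.
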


The proof of this proposition will require an analyses of the fixed point sets $\CO^F$ for finite subgroups $F$ of $\mathrm{Out}(G)$.

\subsection{Fixed points in $\CO$ and extensions} \label{sec: finite ext}
Suppose from now on that the groups $G_1,\dots,G_r$ are one-ended, finitely presented and pairwise non-isomorphic groups. Let $G=G_1*\dots*G_r$ be their free product and consider an extension
$$1\to G\to\Gamma\to F\to 1$$
with $F$ finite. Since $\Gamma$ is quasi-isometric to $G$, it follows that $\Gamma$ is finitely presented and has infinitely many ends. It therefore follows from Dunwoody's accessibility theorem that $\Gamma$ acts cocompactly on a simplicial tree $T$ such that all edge stabilizers are finite and all vertex stabilizers have at most one end. 
Since all the $G_i$ are one-ended it follows that each $G_i$ is contained in a vertex stabilizer of $T$. Moreover, the assumption that $G$ has finite index in $\Gamma$, together with the fact that $\Gamma$-vertex stabilizers in $T$ are one-ended implies that $G_i$ and $gG_jg^{-1}$ have a common fixed-point in $T$ if and only if $i=j$ and $g\in G_i$. We have proved that the induced graph of groups decomposition of $G$ has the groups $G_i$ as vertex groups. In particular, $G\actson T\in\CO$ and $\Gamma\bs T$ is a finite tree. We deduce that $\Gamma$ can be written as an amalgam
$$\Gamma \cong \left(\dots\left(\Gamma_1 \ast_{F_1}\Gamma_2\right)\ast_{F_2} \ldots\right) \ast_{F_{r-1}} \Gamma_r $$ 
such that the $F_1,\ldots,F_{r-1}$ are finite and for each $i=1,\ldots,r$, the group  $\Gamma_i$ contains (up to possibly relabeling) $G_i$ as finite index normal subgroup. We summarize our observations in the following lemma.

\begin{lem} \label{extension-iterated-amalgamated}  
If $\Gamma$ is an extension of a finite group $F$ by $G_1\ast \ldots \ast G_r$, then 
$$\Gamma \cong \left(\dots\left(\Gamma_1 \ast_{F_1}\Gamma_2\right)\ast_{F_2} \ldots\right) \ast_{F_{r-1}} \Gamma_r $$ 
such that the $F_1,\ldots,F_{r-1}$ are finite and (after possibly relabeling) $\Gamma_i$ contains $G_i$ as finite index normal subgroup for each $i=1,\ldots,r$.\qed
\end{lem}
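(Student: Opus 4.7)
The plan is to follow the argument already sketched in the paragraphs preceding the lemma: construct a decomposition of $\Gamma$ from Dunwoody's accessibility theorem, pin down its pieces using the one-endedness of the $G_i$ together with Kurosh rigidity, and then invoke Bass--Serre theory.

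First I would verify the hypotheses of Dunwoody's accessibility theorem for $\Gamma$. The free product $G=G_1*\dots*G_r$ is finitely presented (each $G_i$ is) and, by Stallings' theorem, has infinitely many ends as soon as $r\ge 2$ (the case $r=1$ is vacuous). Since $G$ has finite index in $\Gamma$, the group $\Gamma$ is quasi-isometric to $G$ and so is also finitely presented with infinitely many ends. Dunwoody's theorem then produces a cocompact, inversion-free $\Gamma$-action on a simplicial tree $T$ with finite edge stabilizers and vertex stabilizers that are each either finite or one-ended. By equivariantly collapsing any edge both of whose endpoints have finite stabilizer, we may further assume that every vertex of $T$ has a one-ended stabilizer.

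Next I would identify the vertex stabilizers with the $\Gamma_i$. Each one-ended $G_i$ has bounded orbits in $T$, so it fixes some vertex $v_i$; set $\Gamma_i:=\Stab_\Gamma(v_i)$. Since $G\triangleleft\Gamma$ has finite index, $G\cap\Gamma_i$ has finite index in $\Gamma_i$ and contains $G_i$, so it is a one-ended subgroup of $G$. Kurosh's subgroup theorem then forces $G\cap\Gamma_i\subset gG_jg^{-1}$ for some $g\in G$ and some $j$; the elementary fact that in a free product the conjugate $gG_jg^{-1}$ meets $G_i$ non-trivially only when $i=j$ and $g\in G_i$ gives $G\cap\Gamma_i=G_i$. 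Thus $G_i$ is a finite-index normal subgroup of $\Gamma_i$. The pairwise non-isomorphism of the $G_i$ implies that the $r$ vertices $v_1,\dots,v_r$ lie in distinct $\Gamma$-orbits; conversely, applying the same Kurosh argument to any (one-ended) vertex stabilizer of $T$ shows that every vertex of $T$ is $\Gamma$-equivalent to some $v_i$. Hence $\Gamma\bs T$ is a finite tree with exactly $r$ vertices labelled by the $\Gamma_i$ and $r-1$ edges labelled by finite groups $F_1,\dots,F_{r-1}$.

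Finally, Bass--Serre theory expresses $\Gamma$ as the fundamental group of this graph of groups. Since $\Gamma\bs T$ is a finite tree, attaching one edge at a time (and relabeling the vertex groups so the indexing matches the attaching order) the fundamental group is the iterated amalgam
$$\Gamma\cong\left(\dots\left(\Gamma_1*_{F_1}\Gamma_2\right)*_{F_2}\dots\right)*_{F_{r-1}}\Gamma_r$$
asserted in the lemma. The main subtlety will be in the second step, where one must use the one-endedness of the $G_i$ and the Kurosh-rigidity of free products to pin down $G\cap\Gamma_i$ as exactly $G_i$ (rather than merely containing it); the collapse in the first step and the Bass--Serre reconstruction at the end are then routine.
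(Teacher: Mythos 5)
Your route --- Dunwoody accessibility for $\Gamma$, ellipticity of the one-ended $G_i$ in the resulting tree, a Kurosh argument to identify $G\cap\Gamma_i$ with $G_i$, and then Bass--Serre --- is exactly the paper's, and your Kurosh step is a correct expansion of what the paper only asserts (namely that $G_i$ and $gG_jg^{-1}$ have a common fixed point iff $i=j$ and $g\in G_i$). The problem is your reduction to the case where every vertex stabilizer is one-ended. Collapsing an orbit of edges both of whose endpoints have finite stabilizer replaces those vertices by one whose stabilizer is an amalgam of finite groups over a finite group; such a group is virtually free and, outside of degenerate cases, has infinitely many ends, so it is certainly not one-ended. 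Moreover this move does not touch a finite-stabilizer vertex all of whose neighbours have one-ended stabilizers, so finite vertex stabilizers are not eliminated. As a result, your later claim that every vertex of $T$ is $\Gamma$-equivalent to some $v_i$ is justified only for vertices with infinite stabilizer (there your argument is fine: an infinite $G\cap\Gamma_v$ lies in some $gG_jg^{-1}$, which fixes $gv_j$, and if $v\neq gv_j$ then $G\cap\Gamma_v$ would fix an edge, impossible since edge stabilizers are finite).

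To finish one must rule out surviving finite vertex stabilizers and also show that $\Gamma\backslash T$ is a tree rather than a graph with loops --- a point you assert without argument, and a loop would contribute an HNN factor rather than an amalgam. Both points come from comparing the induced decomposition of $G$ over finite groups with the Grushko decomposition $G=G_1*\dots*G_r$: the $G$-tree $T$ has finite edge stabilizers and the same elliptic subgroups as the Bass--Serre tree of the free product (finite subgroups and subgroups of conjugates of the $G_i$), so the two trees lie in a common deformation space, which forces $G\backslash T$ (hence $\Gamma\backslash T$, the quotient of a finite tree by the finite group $\Gamma/G$) to be a tree and leaves no room for essential finite vertex groups once $T$ is reduced; alternatively one can quote the structure results of Guirardel--Levitt. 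The paper itself is brisk at exactly this spot --- it simply states that the induced decomposition of $G$ has the $G_i$ as vertex groups and that $\Gamma\backslash T$ is a finite tree --- but your proposed collapsing argument, as written, would not deliver that statement.
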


Suppose that $F\subset\Out(G)$ is finite and consider its preimage $\tilde F$ under the homomorphism $\Aut(G)\to\Out(G)$. Applying the argument above to $\Gamma=\tilde F$ we get a tree $T\in\CO$ on which not only $G$ but also the whole group $\tilde F$ acts isometrically. In other words, $T$ is a fix point of the finite group $F$, meaning that $\CO^F\neq\emptyset$ and hence contractible. Since the spine $\mathcal{S}$ is an $\mathrm{Out}(G)$-equivariant retract of $\CO$, we can immediately conclude the following.

\begin{lem}\label{lem: contr fixed} 
The fixed point set $\mathcal{S}^F$ is contractible for every finite subgroup $F$ of $\mathrm{Out}(G)$. \qed
\end{lem}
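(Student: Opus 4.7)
The plan is to reduce the statement to the contractibility of $\CO^F$ itself, which I would establish in two steps: first non-emptiness, then contractibility via a canonical deformation. Because $\CS$ is an $\Out(G)$-equivariant retract of $\CO$, the corresponding retraction $\CO \to \CS$ restricts to a retraction $\CO^F \to \CS^F$; hence once $\CO^F$ is shown to be both contractible and non-empty, the same follows for $\CS^F$.

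The main work lies in producing a fixed point of $F$ in $\CO$. First I would lift $F$ to its preimage $\tilde F \subset \Aut(G)$ under the canonical surjection $\Aut(G) \twoheadrightarrow \Out(G)$. The resulting $\tilde F$ fits into a short exact sequence
\[ 1 \to G \to \tilde F \to F \to 1, \]
so $\tilde F$ is an extension of a finite group by the free product $G_1 \ast \cdots \ast G_r$. Lemma \ref{extension-iterated-amalgamated} then expresses $\tilde F$ as an iterated amalgam of finite extensions of the $G_i$ over finite subgroups, and the associated Bass--Serre tree is a simplicial tree $T$ on which $\tilde F$ acts with finite edge stabilizers and with precisely one $G_i$ of finite index in each vertex stabilizer of the appropriate type. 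Restricting the action to $G \le \tilde F$ produces a point $T \in \CO$, and since the whole group $\tilde F$ acts by isometries preserving the equivariant isometry class of this $G$-tree, the image $F$ in $\Out(G)$ fixes $T$; thus $T \in \CO^F$.

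Contractibility of the now non-empty set $\CO^F$ would then follow from the deformation recalled at the end of Section \ref{sec-tired}. Using $T$ as a basepoint, the canonical Guirardel--Levitt path $t \mapsto T'(t)$ joining $T$ to an arbitrary $T' \in \CO$ is equivariantly determined by its endpoints; in particular, if $T'$ is also fixed by $F$, then every intermediate $T'(t)$ lies in $\CO^F$. This exhibits $\CO^F$ as star-shaped around $T$, and the equivariant retract of $\CO$ onto $\CS$ transports the contractibility to $\CS^F$.

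The main obstacle is the non-emptiness step, and more specifically the verification that the Bass--Serre tree produced by Lemma \ref{extension-iterated-amalgamated} really represents a point of the outer space $\CO$: one needs trivial edge stabilizers for the $G$-action, minimality, and that every non-trivial vertex stabilizer of $G\actson T$ is conjugate to exactly one $G_i$. The one-endedness of each $G_i$ (which forces them to sit inside vertex groups of any splitting over finite subgroups) together with their pairwise non-isomorphy (which prevents different $G_i$'s from being identified with one another) are the crucial hypotheses that make this identification go through, exactly as in the derivation of Lemma \ref{extension-iterated-amalgamated}.
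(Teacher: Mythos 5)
Your proposal is correct and follows essentially the same route as the paper: lift $F$ to $\tilde F\subset\Aut(G)$, apply the Dunwoody/accessibility argument behind Lemma \ref{extension-iterated-amalgamated} to produce a $\tilde F$-invariant tree $T\in\CO^F$, deduce contractibility of $\CO^F$ from the Guirardel--Levitt paths, and transfer it to $\CS^F$ via the equivariant retraction. You have also correctly isolated the role of one-endedness and pairwise non-isomorphy of the $G_i$ in verifying that $T$ really lies in $\CO$.
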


The argument to prove Lemma \ref{lem: contr fixed}, and thus basically also the proof of Lemma \ref{extension-iterated-amalgamated}, was explained to us by Vincent Guirardel.

\subsection{Proof of Proposition \ref{thm-free-product-rigid}}

Starting with the proof of the proposition, suppose that we have an extension $\Gamma$ of a finite group by $G$. Our goal is to construct a model for $\underline E\Gamma$ of dimension 
$$\vcd(G)=\max \{ \vcd(G_i) \  | \ i=1,\ldots,r \}.$$
By Lemma \ref{extension-iterated-amalgamated} we know that $\Gamma$ is of the form
$$\Gamma \cong \left(\dots\left(\Gamma_1 \ast_{F_1}\Gamma_2\right)\ast_{F_2} \ldots\right) \ast_{F_{r-1}} \Gamma_r $$ 
such that  each is $F_i$ is finite  and each $\Gamma_i$ contains $G_i$ as finite index normal subgroup.  Now define a chain of subgroups $\Gamma^{(1)}\subset\Gamma^{(2)}\subset\dots\subset\Gamma^{(r)}=\Gamma$ as follows
$$\Gamma^{(1)}=\Gamma_1\text{ and }\Gamma^{(i)}=\Gamma^{(i-1)}*_{F_{i-1}}\Gamma_i.$$ 
We are going to prove by induction that 
$$\gdim(\Gamma^{(i)})=\max\{\vcd(G_1),\dots,\vcd(G_i)\}$$ 
for $i=1,\dots,r$. The base case follows because $\Gamma^{(1)}=\Gamma_1$ is an extension of a finite group by $G_1$, and the latter is assumed to be dimension rigid. It thus suffices to establish the induction step. Since $\Gamma^{(i)}$ is the amalgamated product over the finite group $F_{i-1}$ of $\Gamma^{(i-1)}$ and $\Gamma_i$, and since 
\begin{align*}
\gdim(&\Gamma^{(i-1)})=\max\{\vcd(G_1),\dots,\vcd(G_{i-1})\}\text{ by induction, and}\\
\gdim(&\Gamma_i)=\vcd(G_i)\text{ because }G_i\text{ is dimension rigid,}
\end{align*}
we see that the induction step and hence also the proposition follow from Lemma \ref{free-gdim} below. \qed

\begin{lem}\label{free-gdim}
If $G=G_1*_FG_2$ is a group which arises as the proper amalgamated product over a finite group $F$ of two groups $G_1,G_2$, then we have 
$$\gdim(G)=\max\{\gdim(G_1),\gdim(G_2),1\}.$$
Here proper means that $F\neq G_1$ and $F\neq G_2$.
\end{lem}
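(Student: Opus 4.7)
The plan is to establish the two inequalities separately; the upper bound requires the bulk of the work.

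\emph{Lower bound.} Each $G_i$ sits inside $G$ as a subgroup, and restricting any model $X$ for $\underline{E}G$ to the $G_i$-action yields a proper $G_i$-CW complex all of whose finite-subgroup fixed point sets are contractible (any finite subgroup of $G_i$ is a fortiori finite in $G$). Hence $X$ is a model for $\underline{E}G_i$ of the same dimension, so $\gdim(G_i)\le\gdim(G)$ for $i=1,2$. Moreover, the properness of the amalgamation ($F\subsetneq G_1$ and $F\subsetneq G_2$) forces $G$ to contain an infinite-order element $ab$ for any $a\in G_1\setminus F$, $b\in G_2\setminus F$, via the normal form in amalgamated products. Thus $G$ is infinite, $\gdim(G)\ge 1$, and $\max\{\gdim(G_1),\gdim(G_2),1\}\le\gdim(G)$.

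\emph{Upper bound.} Let $T$ be the Bass--Serre tree for the splitting $G=G_1\ast_F G_2$: a simplicial $G$-tree with two vertex orbits (stabilizers conjugate to $G_1$ and $G_2$) and one edge orbit (stabilizer conjugate to $F$). Pick models $X_i$ for $\underline{E}G_i$ of dimension $\gdim(G_i)$, and, since $F$ is finite, choose points $x_i\in X_i^F$ (non-empty as $X_i^F$ is contractible). Fixing a fundamental edge $e_0$ with endpoints $v_1,v_2$ such that $G_{v_i}=G_i$ and $G_{e_0}=F$, assemble the $G$-CW complex
\[
X \;=\; \bigl(G\times_{G_1}X_1\bigr)\ \sqcup\ \bigl(G\times_F [0,1]\bigr)\ \sqcup\ \bigl(G\times_{G_2}X_2\bigr)\big/{\sim},
\]
where the identification glues $[g,0]$ to $[g,x_1]$ and $[g,1]$ to $[g,x_2]$. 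This is the standard ``tree of spaces'' for the splitting, and visibly $\dim X=\max\{\dim X_1,\dim X_2,1\}=\max\{\gdim(G_1),\gdim(G_2),1\}$.

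To verify that $X$ is a model for $\underline{E}G$, properness is immediate: cells coming from the $X_i$ have stabilizers finite in $G_i$, and cells in the interval pieces have stabilizers sub-conjugate to the finite group $F$. For any finite subgroup $H\le G$, the fixed set $T^H$ is a non-empty connected subtree (a finite group acting on a tree fixes a point), and a Mayer--Vietoris argument over $T^H$---in which each vertex $v\in T^H$ contributes the contractible fixed set $X_v^H$ and each edge contributes a contractible interval---shows $X^H$ is contractible. This yields $\gdim(G)\le\max\{\gdim(G_1),\gdim(G_2),1\}$. The main technical point is precisely this identification of $X^H$ with a tree of contractible pieces over $T^H$; it is classical, treated for instance within the complex-of-groups formalism of Bridson--Haefliger, Ch.~III.$\mathcal{C}$.
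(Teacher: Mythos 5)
Your proof is correct and follows essentially the same route as the paper's: both build a model for $\underline{E}G$ as a tree of spaces over the Bass--Serre tree, with vertex spaces the chosen models for $\underline{E}G_1$ and $\underline{E}G_2$ joined by intervals attached at $F$-fixed points, and both verify contractibility of $X^H$ for finite $H$ by noting that $H$ fixes a nonempty subtree of $T$ and acts with contractible fixed sets on each vertex space, so that $X^H$ is a tree of contractible pieces. The only cosmetic difference is that you assemble the induced spaces $G\times_{G_i}X_i$ directly, whereas the paper arrives at the same space by first forming $EG$ as the universal cover of a glued $K(G,1)$ and then collapsing the $EG_i$- and $EF$-leaves.
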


Before launching the proof of Lemma \ref{free-gdim} we make two observations. First, note that in the case that one of the groups $G_1,G_2$ is infinite, the claim reduces to 
$$\gdim(G)=\max\{\gdim(G_1),\gdim(G_2)\}.$$
The case that both factors $G_i$ are finite is well-known for in this case $G$ is virtually free and thus has geometric dimension $1$. Secondly, we point out that the corresponding cohomological statement $\underline{\mathrm{cd}}(G)=\max\{\underline{\mathrm{cd}}(G_1),\mathrm{cd}(G_2),1\}$ follows from \cite[Cor. 8]{DemPetTal}. The lemma follows from this cohomological statement, as long as $\underline{\mathrm{cd}}(G)\geq 3$, which covers the case in which we will need Lemma \ref{free-gdim}. Below the interested reader can find a geometric proof of  Lemma \ref{free-gdim}, in general.

\begin{proof}
For $i=1,2$ let $Z_i$ be a model of $\underline EG_i$ of dimension
$$\dim(Z_i)=\gdim(G_i).$$
Choose a point $p_i\in Z_i$ fixed by the finite subgroup $F\subset G_i$. Finally, let $X_i$ be a model for $EG_i$ and note that $X_i\times Z_i$, endowed with the diagonal $G_i$ action, is also a model for $EG_i$. Let $X_F$ for $EF$ and note that, up to modifying $X_i$, we may assume without loss of generality that there is an honest equivariant embedding $\iota_i:X_F\to X_i$.

We get a model $X$ for $EG$ as the universal cover of the $K(G,1)$ obtained as follows
$$((X_1\times Z_1)/G_1)\bigsqcup((X_F/F)\times[0,1])\bigsqcup((X_2\times Z_2)/G_2))\big/_\sim$$
where $\sim$ is the equivalence relation generated by setting 
$$([x],0)\sim [(\iota_1(x),p_1)]\text{ and }([x],1)\sim [(\iota_2(x),p_2)]\text{ for all }x\in X_F.$$

By construction, the space $X$ is the union of copies of $X_1\times Z_1$, $X_2\times Z_2$ and $X_F\times[0,1]$. Denote by $\sim_{BS}$ the equivalence relation on $X$ generated by asserting that (1) any two points in a copy of the $X_i\times Z_i$ are equivalent, and (2) any two points of the form $(x,t)$ and $(x',t)$ in any copy of $X_F\times[0,1]$ are equivalent. Then $T=X/_{\sim_{BS}}$ is nothing but the Bass-Serre tree of the splitting $G=G_1*_FG_2$. 

Note also that $X$ is covered (one could think foliated) by disjoint copies of the spaces $X_1,X_F,X_2$. We will refer to each one of these spaces as a leaf, and let $\sim_\CF$ be the equivalence relation generated by declaring any two points in the same leaf to be equivalent. We claim that the space $Z=X/_{\sim_\CF}$ is the desired model for $\underline EG$. To begin with note that the set of leaves of $X$ is invariant under the action $G\actson X$. We thus get an induced action $G\actson Z$. In fact, since $\sim_{BS}$ is coarser than $\sim_\CF$ we have a commutative diagram of $G$-equivariant maps
$$\xymatrix{ X \ar[d] \ar[dr] &  \\ Z=X/_{\sim_\CF} \ar[r]^p & T=X/_{\sim_{BS}}.}$$
Suppose now that $K\subset G$ is a finite group and note that $K$ fixes a vertex $v$ in $T$. It follows that $K$ is contained in a conjugate of $G_1,G_2$. Say for the sake of concreteness that $K\subset G_1$, meaning that $v$ is the vertex of $T$ corresponding to the subgroup $G_1$. Then $K$ acts on $p^{-1}(v)=Z_1$ and has a fixed point there in because $Z_1$ is a model for $\underline EG_1$. Having proved that $Z^K\neq\emptyset$ for all $K\subset G$ finite, it remains to be proved that $Z^K$ is contractible. Noting that $(p^{-1}(v))^K$ is contractible for every vertext $v\in T$ fixed by $K$, we get that $Z^K$ is a tree of contractible spaces and thus contractible. 

Having proved that $Z$ is a model for $\underline EG$ note that 
$$\dim(Z)=\max\{\dim(Z_1),\dim(Z_2),1\}=\max\{\gdim(G_1),\gdim(G_2),1\}.$$
This proves that 
$$\gdim(G)\le\max\{\gdim(G_1),\gdim(G_2),1\}.$$
However, the other inequality also holds because $G_1,G_2\subset G$ and because the amalgamated product is non-trivial.
\end{proof}

\subsection{Proof of Theorem \ref{main2}} We conclude this paper by finishing the proof of the main theorem. Let $p_1,\dots,p_{r+1}$ be pairwise distinct odd primes and let $G_i=G_{p_i}$ be the groups provided by Proposition \ref{groups-Gp}. Note that $G_i$ and $G_j$ are isomorphic if and only if $i=j$. We will consider the free product
$$G=G_1*\dots*G_{r+1}.$$
Each one of the groups $G_i$ is CAT(0) and Gromov-hyperbolic and has finite outer automorphism group. In particular $G$ is also CAT(0) and Gromov-hyperbolic. Moreover, each one of the groups $G_i$ is dimension rigid, finitely presented (since Gromov-hyperbolic), and one-ended. In particular, we get from Proposition \ref{thm-free-product-rigid} that $G$ is also dimension rigid.

Noting that we have $r+1$ factors, we get from Proposition \ref{prop-dimensions} that $\Out(G)$ is virtually torsion-free with $\vcd(\Out(G)))\le 5r$ but $\cdm(\Out(G))\ge 6r$, so we conclude that
$$\vcd(\Out(G))\le \gdim(\Out(G))-r$$
as desired.

It remains to show that $\Out(G)$ admits a cocompact model for $\underline{E}\Out(G)$. To this end, first note that the proof of \cite[Prop. 5.1]{LuckWeiermann} in fact proves the following result: Let $\Gamma$ be a group and $Z$ a cocompact $\Gamma$-CW-complex such that $Z^F$ is contractible for every finite subgroup $F$ of $\Gamma$ and such that the stabilizers of the action of $\Gamma$ on $Z$ all admit cocompact models for proper actions. Now let $X$ be a model for $\underline{E}\Gamma$ (which always exists, e.g. by \cite[Th. 1.9.]{Luck2}), then  $Z \times X$, equipped with the diagonal $\Gamma$-action is $\Gamma$-equivariantly homotopy equivalent to a cocompact $\Gamma$-CW-complex. Since $Z \times X$ is also a model for $\underline{E}\Gamma$, we conclude that $\Gamma$ admits a cocompact model for $\underline{E}\Gamma$.

Since the action of $\Out(G)$ on the spine $\mathcal{S}$ of the outer space $\CO$ associated to $G_1 \ast \ldots \ast G_{r+1}$ is cocompact and the fixed point set $\mathcal{S}^F$ is contractible for every finite subgroup $F$ of $\mathrm{Out}(G)$ by Lemma \ref{lem: contr fixed}, we conclude that in order to show that $\Out(G)$ admits a cocompact model for $\underline{E}\Out(G)$, it suffices to show that that the stabilizers of the action of $\Out(G) $ on $\mathcal{S}$ admit a cocompact model for proper actions.

 As we discussed earlier, the stabilizers of this action are of the form $\prod_{i=1}^{r+1} M_{n_i}(G_i)$ where $M_{n_i}(G_i)$ fits into a short exact sequence

\[    1 \rightarrow G_i^{n_i}\rightarrow M_{n_i}(G_i) \rightarrow \Out(G_i) \rightarrow 1.   \]
Since $\Out(G_i)$ is finite and $G_i$ is Gromov-hyperbolic, we conclude that $\prod_{i=1}^{r+1} M_{n_i}(G_i)$ is finite index subgroup of a finite product of Gromov-hyperbolic groups.
Since any Gromov-hyperbolic group admits a cocompact model for proper actions by \cite{MeintrupSchick}, we are done for $\Out(G)$.

We comment now briefly on the proof for $\Aut(G)$. First note that $\Aut(G)$ is virtually torsion free. For instance, this follows from the fact that $\Aut(G)$ is a subgroup of the virtually torsion free subgroup $\Out(G*\BZ/2\BZ)$. Since $\Aut(G)$ is virtually torsion free we get from the exact sequence $$1\to G\to\Aut(G)\to\Out(G)\to 1$$ that 
$$\vcd(\Aut(G))\le\vcd(G)+\vcd(\Out(G))\le 3+5r.$$
To get a lower bound for the geometric dimension we proceed as in the proof of Proposition \ref{prop-dimensions}. Let $T\in\CS$ be a $G$-tree such that $[G_1],\dots,[G_{r+1}]$ are the only vertices of $G\bs T$, where $[G_2],\dots,[G_{r+1}]$ are leaves and where $[G_1]$ is connected to $[G_i]$ for all $i\ge 2$. As we mentioned in Section \ref{sec-tired}, the stabilizer 
$$\Stab(T)=M_{r}(G_1)\times M_1(G_2)\times\dots\times M_1(G_{r+1})$$
of $T$ lifts to $\Aut(G)$. Moreover, $\Stab(T)$ contains the group $G_1^{r}\times G_2\times\dots\times G_{r+1}$. As in Proposition \ref{prop-dimensions} we get thus that $\cdm(\Stab(T))\ge 6r$ and thus that
$$\gdim(\Aut(G))=\cdm(\Aut(G))\ge 6r.$$
The claim follows thus when we use $r+4$ factors instead of only $r+1$.
\qed

\end{document}